\theoremstyle{plain}
\newtheorem{theorem}{Theorem}[section]
\newtheorem{lemma}[theorem]{Lemma}
\newtheorem{corollary}[theorem]{Corollary}
\newcommand{\MainTheoremName}{Main Theorem}
\theoremstyle{definition}
\newtheorem{definition}[theorem]{Definition}
\newtheorem{example}[theorem]{Example}
\theoremstyle{remark}
\newtheorem{remark}[theorem]{Remark}
\newtheorem{question}[theorem]{Question}
\numberwithin{equation}{section}
\DeclareMathOperator{\cf}{cf}
\DeclareMathOperator{\ci}{ci}
\newcommand{\nbd}{\nobreakdash}
\newcommand{\card}[1]{\left\lvert #1\right\rvert}
\newcommand{\la}{\langle}
\newcommand{\ra}{\rangle}
\newcommand{\cardd}{\mathfrak{d}}
\newcommand{\cardc}{\mathfrak{c}}
\newcommand{\cardp}{\mathfrak{p}}
\newcommand{\cardu}{\mathfrak{u}}
\newcommand{\inv}[1]{#1^{-1}}
\newcommand{\closure}[1]{\overline{#1}}
\newcommand{\restrict}{\upharpoonright}
\newcommand{\cell}[1]{c\left(#1\right)}
\newcommand{\hatcell}[1]{\hat{c}\left(#1\right)}
\newcommand{\weight}[1]{w(#1)}
\newcommand{\character}[1]{\chi(#1)}
\newcommand{\minchar}[1]{\mathrm{min}\chi(#1)}
\newcommand{\hattight}[1]{\hat{\mathrm{t}}(#1)}
\newcommand{\hatfree}[1]{\hat{\mathrm{F}}(#1)}
\newcommand{\free}[1]{\mathrm{F}(#1)}
\newcommand{\tight}[1]{\mathrm{t}(#1)}
\newcommand{\pseudocharacter}[1]{\psi(#1)}
\newcommand{\reals}{\mathbb{R}}
\newcommand{\wma}{we may assume}
\newcommand{\iso}{\cong}
\newcommand{\tlex}{\mathrm{lex}}
\newcommand{\mcA}{\mathcal{ A}}
\newcommand{\mcB}{\mathcal{ B}}
\newcommand{\mcC}{\mathcal{ C}}
\newcommand{\mcD}{\mathcal{ D}}
\newcommand{\mcF}{\mathcal{ F}}
\newcommand{\mcG}{\mathcal{ G}}
\newcommand{\mcU}{\mathcal{ U}}
\newcommand{\mcV}{\mathcal{ V}}
\newcommand{\mbP}{\mathbb{ P}}
\newcommand{\om}{\omega}
\newcommand{\oml}{\omega_1}
\newcommand{\betaoo}{\beta\om\setminus\om}
\newcommand{\al}{\aleph}
\newcommand{\ka}{\kappa}
\newcommand{\lm}{\lambda}
\newcommand{\ie}{{\it i.e.},}
\newcommand{\arhan}{Arhan\-gel${}^\prime$ski\u\i}
\newcommand{\shap}{Shapirovski\u\i}
\newcommand{\locsplit}[3][.]{\mathrm{split}_{#2}^{\ifx#1.{}\else{#1}\fi}(#3)}
\newcommand{\cardm}{\mathfrak{m}}
\newcommand{\cardml}[1]{\mathfrak{m}_{\sigma\mathrm{-}#1\mathrm{-linked}}}
\newcommand{\cardmcen}{\mathfrak{m}_{\sigma\mathrm{-centered}}}
\newcommand{\cardmctbl}{\mathfrak{m}_{\mathrm{countable}}}
\newcommand{\todor}{Todor\v cevi\'c}
\newcommand{\trileq}{\trianglelefteq}
\newcommand{\nbhd}[1]{\mathrm{Nbhd}\left(#1\right)}
\newcommand{\eqcf}{\equiv_\mathrm{cf}}
\newcommand{\Reg}{\mathrm{Reg}}
\newcommand{\escape}[1]{\mathrm{Escape}\left(#1\right)}
\newcommand{\escapero}[1]{\mathrm{Escape}_{\mathrm{RO}}\left(#1\right)}
\newcommand{\add}[1]{\mathrm{Add}\left(#1\right)}
\newcommand{\ro}[1]{\mathrm{RO}\left(#1\right)}
\title{Forbidden rectangles in compacta}
\author{
David Milovich}
\address{
Department of Engineering, Mathematics and Physics \\
Texas A\&M International University \\
5201 University Blvd\\
Laredo, TX\\
78041 USA}
\email{david.milovich@tamiu.edu}
\subjclass[2010]{Primary: 54A25, Secondary: 03E04, 54D80, 54D70}
\keywords{cofinal type, compact, homogeneous, cellularity, ultrafilter, Fubini product}
\begin{document}

\begin{abstract}
We establish negative results about ``rectangular'' 
local bases in compacta. For example, there is no compactum 
where all points have local bases of cofinal type
$\omega\times\om_2$. For another, the compactum $\beta\om$ has 
no nontrivially rectangular local bases, and the same is consistently 
true of $\betaoo$: no local base in $\beta\om$ has cofinal type 
$\kappa\times\cardc$ if $\kappa<\cardml{n}$ for some $n\in[1,\om)$.
Also, CH implies that every local base in $\betaoo$ has
the same cofinal type as one in $\beta\om$.

We also answer a question of Dobrinen and \todor\ about cofinal 
types of ultrafilters: the Fubini square of a filter on $\om$ 
always has the same cofinal type as its Fubini cube. Moreover,
the Fubini product of nonprincipal P-filters on $\om$ is commutative
modulo cofinal equivalence.
\end{abstract}

\maketitle

\section{Introduction}\label{intro}

Recall that a space $X$ is \emph{homogeneous} if for all $p,q\in X$,
$h(p)=q$ for some autohomeomorphism of $X$.
Many questions about compact homogeneous spaces 
are still unsolved (in all models of ZFC) after decades;
see~\cite{vanmillopit2} for a survey of these questions.
For example, Rudin's Problem asks whether every homogeneous compactum
has a convergent sequence.
(A \emph{compactum} is a compact Hausdorff space.)
Our motivating question is Van Douwen's Problem: 
Is there a homogeneous compactum $X$ with 
a pairwise disjoint family $\mcF$ of open sets
such that $\card{\mcF}>\card{\reals}$.
See~\cite{kunenlhc} for more about Van Douwen's Problem. 

Pairwise disjoint families of open sets are called \emph{cellular families}
for short;
the \emph{cellularity} $\cell{X}$ of a space $X$ is the supremum of
the cardinalities of its cellular families. Oversimplifying, 
Van Douwen's Problem is hard because if we seek an infinitary
operation on spaces that preserves both homogeneity and compactness, 
we apparently find only the product operation and a special quotient
operation~\cite{milovicha}. However,
the cellularity of a product is just the supremum of 
the cellularities of its finite subproducts, and taking a
quotient never increases cellularity.

What might a homogeneous compactum with large cellularity ``look like''?
One way to make this question more precise is 
to ask for examples of directed sets $(D,\leq)$ such that 
any homogeneous compactum $X$ with a local base $\mcB$ satisfying
$(\mcB,\supseteq)\iso(D,\leq)$ will satisfy some lower bound
of $\cell{X}$.  Since every infinite compactum has a countable
set with a limit point, homogeneity implies that any such $D$
must have a countable unbounded set. However, for any directed
$D$, $\omega\times D$ has a countable unbounded set. So, for a
simple, ``rectangular'' example, if any space $X$ 
has a clopen local base $\mcB$ of order type $\omega\times\kappa$ 
(where $\kappa$ is an infinite cardinal), then $\cell{X}\geq\kappa$
because if $U\colon \omega\times\kappa\iso\mcB$, then 
$\{U(0,i)\setminus U(0,i+1):i<\kappa\}$ is a cellular family.
In Section~\ref{rectcell}, we will remove the assumption 
of clopenness in exchange for assuming merely that $X$ is $T_3$.
Moreover, we will relax ``order type'' to ``cofinal type.''

Two local bases at a common point $p$ in a space $X$ 
could have different order types, but with respect to
the containment ordering $\supseteq$, they are both 
cofinal subsets of the neighborhood filter $\nbhd{p,X}$ 
(our notation for the set of all $U\subseteq X$ 
where $p$ is in the interior of $U$). 
Therefore, it is more natural to investigate
a local base's cofinal type than its order type, where 
preorders $P$ and $Q$ are \emph{cofinally equivalent} 
if there is a preorder $R$ such that $P$ and $Q$
are order isomorphic to cofinal subsets of $R$.
(It is not hard to check that this is an equivalence relation.)
Since all local bases at $p$ are cofinally equivalent 
to $\nbhd{p,X}$, we will state our results
in terms of the cofinal type of $\nbhd{p,X}$, with
the convention that families of subsets of a space
are ordered by containment.

In section~\ref{rectcell}, we establish that if a $T_3$
space $X$ has a neighborhood filter cofinally equivalent to
a finite product $\prod_{i\leq n}\ka_i$ of regular cardinals
$\ka_0<\cdots<\ka_n$, then $\cell{X}\geq\ka_n$.
In section~\ref{skinny}, we prove that not all points in
a compactum can have a fixed ``skinny'' cofinal type.
For example, given $\ka_0<\cdots<\ka_n$ as above, if
$\ka_m^+<\ka_{m+1}$ for some $m<n$, then not all neighborhood
filters in a compactum can be cofinally equivalent to 
$\prod_{i\leq n}\ka_i$.  So, in a homogeneous compactum,
no neighborhood filter is cofinally equivalent to such
$\prod_{i\leq n}\ka_i$. Section~\ref{skinny} then goes on
to a stronger theorem for homogeneous compacta in models
of GCH. A corollary is that the supremum of the 
cardinalities of the free sequences in a homogeneous 
compactum $X$ is always attained if GCH holds.
Left open is whether a homogeneous compactum 
could have a neighborhood filter cofinally equivalent 
to $\prod_{i\leq n}\om_i$ for some $n\geq 1$. 

In section~\ref{ultrafilters}, we shift our attention
to the inhomogeneous compactum $\betaoo$. In this space,
every point corresponds to a nonprincipal ultrafilter on $\om$, 
and the cofinal type of a neighborhood filter (ordered by
containment) is the same as the cofinal type of the
corresponding ultrafilter ordered by eventual containment.
Among other things, we observe that $\betaoo$ consistently has 
no neighborhood filter cofinally equivalent to any
$\prod_{i\leq n}\ka_i$ as above. In ZFC, it is known that
no neighborhood filter of $\betaoo$ is cofinally equivalent
to $\om\times\cardc$~\cite{milovichtuk}. We extend this
result, ruling out $\ka\times\cardc$ for all 
$\ka<\sup_{n<\om}\cardml{n}$, 
which is at worst very close to optimal because 
it is consistent to have $\cardmcen<\cardc$ and
a neighborhood filter of $\betaoo$ cofinally equivalent
to $\cardmcen\times\cardc$.

In section~\ref{fubini}, we prove some 
results about cofinal types of neighborhood bases in
$\beta\om$, which are exactly the cofinal types of 
neighborhood bases of ultrafilters on $\om$. In particular,
we answer a question of Dobrinen and \todor~\cite{dobtor} 
by showing that the Fubini square and Fubini cube of
a filter on $\omega$ are always cofinally equivalent.
(The ordering is containment.) We also establish
commutativity modulo cofinal equivalence for the
Fubini product of nonprincipal $P$-filters on $\om$.
These results follow from our ``rectangular''
characterization of the Fubini product: if $F$ and $G$
are nonprincipal filters on $\omega$, then the Fubini product of
$F$ and $G$ is cofinally equivalent to $F\times G^\omega$.

\section{Rectangles and cellularity}\label{rectcell}

Given an ordinal $\alpha$, let $2^\alpha_\tlex$ be 
${}^{\alpha}2$ with the topology induced by the lexicographic
ordering. If $\ka_0<\cdots<\ka_n$ are regular cardinals
and $\lm_i$ is a regular cardinal $\leq\ka_i$, for all $i$, 
then it easy to find a point $p$ in 
$\prod_{i\leq n}2^{\ka_i}_\tlex$ such that each
$p(i)$ has cofinality $\lm_i$ and coinitiality $\ka_i$ in
$2^{\ka_i}_\tlex$, where
our convention is that $1$ is the unique finite regular cardinal,
every minimum of a preorder has cofinality $1$, and 
every maximum of a preorder has coinitiality $1$.
Conversely, for every $p\in\prod_{i\leq n}2^{\ka_i}_\tlex$ and
$i\leq n$, one of $\cf(p(i))$ and $\ci(p(i))$ is $\ka_i$,
and the other is some regular $\lm_i\leq\ka_i$. Therefore,
the spectrum of cofinal types of $\prod_{i\leq n}2^{\ka_i}_\tlex$
is 
$
\left\{\prod_{i\leq n}(\lm_i\times\ka_i)/\eqcf:\ka_i\geq\lm_i\in\Reg\right\}
$
where $\Reg$ is the class of regular cardinals.

Since the diagonal of $\ka_i\times\ka_i$ is cofinal,
$\ka_i\times\ka_i\eqcf\ka_i\eqcf 1\times\ka_i$.
Therefore, all neighborhood filters of 
$\prod_{i\leq n}2^{\om_i}_\tlex$ are cofinally equivalent to
$\prod_{i\leq n}\om_i$. However, this space, though compact
Hausdorff, is not homogeneous if $n\geq 1$. 
To see this, recall that the 
\emph{$\pi$\nbd-character} $\pi\character{p,X}$ 
of a point $p$ in a space $X$ is the minimum of 
the cardinalities of the \emph{local $\pi$-bases} at $p$,
where a local $\pi$-base at $p$ is a family of nonempty open sets
that includes a subset of every neighborhood of $p$.
If some point $p$ in a linearly ordered space $X$ has
cofinality $\lm\geq\om$, then $p$ has a local $\pi$-base
of size $\lm$: $\{(q_i,q_{i+1}):i<\lm\}$ for some
increasing sequence $\vec{q}$ converging to $p$.
Moreover,
$\pi\character{p,X}=\min(\{\cf(p),\ci(p)\}\setminus\{1\})$.
Therefore, if $p\in\prod_{i\leq n}2^{\om_i}_\tlex$, then
$$
\pi\character{p,X}=\max_{i\leq n}\,
\min(\{\cf(p(i)),\ci(p(i))\}\setminus\{1\}).
$$
It follows that $\prod_{i\leq n}2^{\om_i}_\tlex$ has points
with $\pi$\nbd-character $\om_i$, for all $i\leq n$.
Thus, $\prod_{i\leq n}2^{\om_i}_\tlex$ is only homogeneous
in the trivial case $n=0$.
More generally, it is shown in~\cite{arhanprodlots} that 
for any homogeneous compact product of linear orders, all
factors $X$ are such that every $p\in X$ satisfies
$\{\cf(p,X),\ci(p,X)\}\subseteq\{1,\om\}$.

\begin{question}\label{homogrect}
Is there a homogeneous compactum with a neighborhood
filter cofinally equivalent to $\prod_{i\leq n}\om_i$ for
some $n\geq 1$?
\end{question}

There is some weak evidence in~\cite{milovichnth} for a ``no''
answer to the above question. 
Suppose that $Y$ is a homogeneous compactum with a neighborhood
filter cofinally equivalent to $\prod_{i\leq n}\om_i$ for
some $n\geq 1$. If $Y$ also had a point of uncountable $\pi$\nbd-character, 
then, by Theorem 5.7 of~\cite{milovichnth}, there would be
a Tukey map (see Definition~\ref{deftukey}) from $[\om_1]^{<\om}$ 
(ordered by $\subseteq$) to $\prod_{i\leq n}\om_i$.
However, it is well known that there is no such Tukey map.
(For a quick proof, check that
every uncountable subset of $\prod_{i\leq n}\om_i$ has an
infinite bounded subset, and that this property is precisely
the negation of having a Tukey map from $[\om_1]^{<\om}$.)
Thus, a positive answer to Question~\ref{homogrect} requires
a homogeneous compactum whose points all have 
$\pi$\nbd-character $\om$ but character $\om_n$.
(See Definition~\ref{defchar}.)
In almost all known homogeneous compacta $X$, $\pi$\nbd-character
equals character at all points.
The only known class of exceptions was discovered by Van Mill
in~\cite{vanmillindep}, and even these exceptions consistently 
do not exist: they are homogeneous if $\mathrm{MA}+\neg\mathrm{CH}$ 
holds but inhomogeneous if CH holds.
Moreover, it is shown in~\cite{milovichnth} that in
all known homogeneous compacta $X$, all points $p$ are \emph{flat},
that is, satisfy $\nbhd{p,X}\eqcf[\ka]^{<\om}$
where $\ka$ is the character of $p$. 
(In particular, Van Mill's exceptional homogeneous compacta
are all separable and have weight less than $\cardp$;
by Theorem 2.16 of~\cite{milovichnth}, any homogeneous
compactum satisfying these two properties has only flat points.)

Question~\ref{homogrect} is relevant to Van Douwen's Problem
because of the next theorem.

\begin{theorem}\label{t3rectcell}
If $\ka_0<\cdots<\ka_n$ are regular cardinals, $X$ is a $T_3$ space, 
$p\in X$, and $\nbhd{p,X}\eqcf\prod_{i\leq n}\ka_i$,
then $\cell{X}\geq\ka_n$.
\end{theorem}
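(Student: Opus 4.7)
The plan is to use the cofinal equivalence $\nbhd{p,X}\eqcf\prod_{i\leq n}\kappa_i$ to fix a monotone cofinal map $f\colon\prod_{i\leq n}\kappa_i\to\nbhd{p,X}$, replacing each value by its interior so that every $f(\vec\alpha)$ is an open neighborhood of $p$. The aim is then to construct a cellular family of size $\kappa_n$ inside $X$, exploiting the rectangular cofinal type.

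The first key move is a slice dichotomy. For each $\vec\alpha\in\prod_{i<n}\kappa_i$, the sequence $\gamma\mapsto f(\vec\alpha,\gamma)$ is a decreasing chain of open neighborhoods of $p$; it either stabilizes at some $\gamma^*(\vec\alpha)<\kappa_n$ or not. If every slice stabilized, then $\{f(\vec\alpha,\gamma^*(\vec\alpha)):\vec\alpha\in\prod_{i<n}\kappa_i\}$ would be a cofinal subset of $\nbhd{p,X}$ of cardinality at most $\kappa_{n-1}<\kappa_n$, contradicting the fact that $\nbhd{p,X}$ has cofinality $\kappa_n$. So some $\vec\alpha^*\in\prod_{i<n}\kappa_i$ yields a non-stabilizing slice $V_\gamma:=f(\vec\alpha^*,\gamma)$ with $V_\gamma\supsetneq V_{\gamma+1}$ on a cofinal set $S\subseteq\kappa_n$ of size $\kappa_n$.

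Given such a slice, for each $\gamma\in S$ I use $T_3$ to sandwich a closed neighborhood $K_\gamma$ of $p$ inside $V_{\gamma+1}$, pick a point $q_\gamma\in V_\gamma\setminus V_{\gamma+1}\subseteq V_\gamma\setminus K_\gamma$, and again apply $T_3$ to find an open set $W_\gamma\ni q_\gamma$ with $\overline{W_\gamma}\subseteq V_\gamma\setminus K_\gamma$. The main obstacle is pairwise disjointness of the $W_\gamma$: for $\gamma<\gamma'$ in $S$, the identity $W_\gamma\cap K_\gamma=\emptyset$ yields $W_\gamma\cap W_{\gamma'}=\emptyset$ only if $W_{\gamma'}\subseteq K_\gamma$, which would in turn follow from $V_{\gamma'}\subseteq K_\gamma$. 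Since the slice $\{V_\gamma\}$ need not be a local base at $p$, this containment is not automatic. I plan to resolve this by interleaving the slice construction with a cofinal descent along $f$: inductively choose $(\vec\alpha_\beta,\gamma_\beta)_{\beta<\kappa_n}$ and nested closed neighborhoods $K_\beta\subseteq f(\vec\alpha_\beta,\gamma_\beta+1)\subsetneq f(\vec\alpha_\beta,\gamma_\beta)\subseteq K_{\beta-1}$, using the cofinality of $f$ to fit $f(\vec\alpha_\beta,\gamma_\beta)$ inside $K_{\beta-1}$ at each stage and reapplying the slice dichotomy to the upper cone over the previous choice to secure the strict drop. Because $|\prod_{i<n}\kappa_i|<\kappa_n$, pigeonhole keeps the induction alive through $\kappa_n$ stages; at limit stages, the additivity of $\nbhd{p,X}$, which equals the additivity $\kappa_0$ of $\prod_{i\leq n}\kappa_i$, makes intersections of small-cofinality-many $K_\beta$ neighborhoods again neighborhoods, while larger-cofinality limits are bridged by selecting a fresh cofinal refinement from $f$ that lies below enough previously chosen $K_\beta$ to preserve disjointness with all earlier $W_\beta$. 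The resulting family $\{W_\beta:\beta<\kappa_n\}$ is cellular of size $\kappa_n$.
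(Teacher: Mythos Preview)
Your inductive construction has a genuine gap at limit stages. You want nested closed neighborhoods $K_\beta$ with $W_\beta\subseteq f(\vec\alpha_\beta,\gamma_\beta)\setminus K_\beta$ and $W_\beta$ disjoint from every earlier $W_{\beta'}$. Since $W_{\beta'}\cap K_{\beta'}=\varnothing$, disjointness forces $W_\beta\subseteq\bigcap_{\beta'<\beta}K_{\beta'}$, hence $f(\vec\alpha_\beta,\gamma_\beta)\subseteq\bigcap_{\beta'<\beta}K_{\beta'}$; there is no ``enough'' short of ``all.'' But when $\cf(\beta)\geq\kappa_0=\add{\nbhd{p,X}}$, the intersection $\bigcap_{\beta'<\beta}K_{\beta'}$ need not be a neighborhood of $p$, so no value of $f$ fits inside it and the recursion dies. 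Your sentence about ``larger-cofinality limits bridged by a fresh cofinal refinement lying below enough previously chosen $K_\beta$'' does not repair this: a fresh $f$-value below only some of the $K_{\beta'}$ gives no control over intersection with the corresponding $W_{\beta'}$. The pigeonhole remark about $\lvert\prod_{i<n}\kappa_i\rvert<\kappa_n$ is irrelevant here; the obstruction is the length of the descending chain of $K_\beta$'s, not the number of first coordinates used. (There is also a minor issue earlier: a monotone cofinal $f\colon\prod_{i\leq n}\kappa_i\to\nbhd{p,X}$ is not automatic from cofinal equivalence and needs an argument.)

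The paper sidesteps the whole interleaving problem by passing to \emph{regular open} neighborhoods. Using the gap $\cf\bigl(\prod_{i<n}\kappa_i\bigr)<\kappa_n=\add{\prod_{n\leq i}\kappa_i}$ together with the complete-lattice structure of the regular open sets, it produces a strictly decreasing $\kappa_n$-sequence $(U_i)_{i<\kappa_n}$ of regular open neighborhoods of $p$ (Lemma~\ref{escaperofilled}). For regular open $U_i\supsetneq U_{i+1}$ one has $U_i\setminus\closure{U_{i+1}}\neq\varnothing$ automatically (since $\interior\closure{U_{i+1}}=U_{i+1}$), and the family $\{U_i\setminus\closure{U_{i+1}}:i<\kappa_n\}$ is pairwise disjoint with no recursion needed (Theorem~\ref{escapecell}). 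The paper's Example after Theorem~\ref{escapecell} shows the regular-open hypothesis is not decorative: a mere strictly decreasing $\kappa$-chain of open neighborhoods does \emph{not} yield cellularity $\geq\kappa$ in general, which is exactly the wall your direct construction runs into.
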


We will actually prove a stronger result
and deduce the above theorem as a corollary.

\begin{definition}
Given a point $p$ in a space $X$,
\begin{itemize}
\item an \emph{escape sequence} at $p$ is a transfinite sequence of
neighborhoods $U_0\supseteq U_1\supseteq\cdots\supseteq U_i\supseteq\cdots$
of $p$ such that $\bigcap_i U_i$ is not a neighborhood of $p$;
\item $\escape{p,X}$ is the set of infinite cardinals $\ka$
for which $p$ has a $\ka$\nbd-long escape sequence;
\item $\escapero{p,X}$ is the set of infinite cardinals $\ka$
for which $p$ has a $\ka$\nbd-long escape sequence 
consisting of regular open sets;
\item $\hatcell{X}$ is the least cardinal $\ka$ such that $X$
lacks a cellular family of size $\ka$. 
(For increased precision at limit cardinals, 
we use $\hatcell{X}$ instead of $\cell{X}$.)
\end{itemize}
\end{definition}

\begin{theorem}\label{escapecell}
If $X$ is a space and $\ka\in\escapero{p,X}\cap\Reg$ for some $p\in X$,
then $\hatcell{X}>\ka$.
\end{theorem}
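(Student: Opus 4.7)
The plan is to convert the escape sequence directly into a cellular family of the same length, using regular openness to guarantee that each ``annulus'' between consecutive members is nonempty.

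Start with a $\ka$-long escape sequence $\la U_i : i<\ka\ra$ of regular open neighborhoods of $p$. The first step is to thin it so that it is strictly $\subsetneq$-decreasing. For each $i$ there must be some $j>i$ with $U_j\subsetneq U_i$: otherwise $U_j=U_i$ for all $j\geq i$, so $\bigcap_{k<\ka}U_k=U_i$ is a neighborhood of $p$, contradicting the escape property. Using regularity of $\ka$, recursively choose a strictly increasing, continuous sequence of indices $i_\alpha<\ka$ with $U_{i_{\alpha+1}}\subsetneq U_{i_\alpha}$; at limit stages the supremum stays below $\ka$, and the resulting indexed collection $\{U_{i_\alpha}:\alpha<\ka\}$ is strictly $\subsetneq$-decreasing with the same intersection $\bigcap_{i<\ka}U_i$, which is still not a neighborhood of $p$. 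So I may assume $\la U_i\ra$ itself is strictly $\subsetneq$-decreasing.

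The crucial structural observation is that if $V\subsetneq U$ are regular open, then $U\setminus\closure{V}$ is nonempty. Indeed, were $U\subseteq\closure{V}$, combining with $V\subseteq U$ would yield $\closure{U}=\closure{V}$, and then regular openness would give $U=\interior(\closure{U})=\interior(\closure{V})=V$, a contradiction. This is where regular openness is essential: for general open sets $V$ could be a dense proper subset of $U$, making the difference empty, and the whole argument would collapse.

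With this in hand, set $W_i=U_i\setminus\closure{U_{i+1}}$ for each $i<\ka$. Each $W_i$ is open and, by the previous paragraph, nonempty. For $i<j<\ka$ we have $W_j\subseteq U_j\subseteq U_{i+1}\subseteq\closure{U_{i+1}}$, so $W_i\cap W_j=\emptyset$. Hence $\{W_i:i<\ka\}$ is a cellular family of size $\ka$, witnessing $\hatcell{X}>\ka$. The only step needing any care is the initial thinning; it is routine once regularity of $\ka$ is used, and the genuine content of the theorem is the elementary regular-open lemma above.
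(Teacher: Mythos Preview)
Your proof is correct and follows essentially the same approach as the paper's: thin the escape sequence to be strictly decreasing using regularity of $\ka$, then take the annuli $U_i\setminus\closure{U_{i+1}}$ as the cellular family. You are more explicit than the paper in justifying why $U\setminus\closure{V}\ne\varnothing$ for regular open $V\subsetneq U$, but the underlying argument is the same.
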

\begin{proof}
Let $\vec{U}$ be a $\ka$\nbd-long regular open escape sequence at $p$.
Since $\ka$ is regular and $\vec{U}$ cannot be eventually constant,
we may thin out $\vec{U}$ such that it is strictly decreasing.
Since each $U_i$ and $U_{i+1}$ are regular open, 
each $U_i\setminus\closure{U}_{i+1}$ is nonempty, 
so $\{U_i\setminus\closure{U}_{i+1}:i<\ka\}$ is a cellular family
of size $\ka$.
\end{proof}

\begin{example}
If $X=2^{\om_1}$ (with the product topology), then there is an
$\om_1$\nbd-long escape sequence $\vec{U}$ at $\vec{0}$
given by $U_i=\bigcup_{i\leq j<\oml}\inv{\pi_j}[\{0\}]$.
However, $\hatcell{X}=\om_1$ (by a well-known $\Delta$-system argument).
So, by Theorem~\ref{escapecell}, at no point in $X$ 
is there a regular open escape sequence with length $\om_1$.
Thus, despite the set of regular open neighborhoods of $\vec{0}$ being
cofinally equivalent with the set of open neighborhoods of $\vec{0}$,
the former never has unbounded increasing $\om_1$-sequences, while the
latter does. Moreover,
the neighborhood filter of $\vec{0}$ in $X$ is cofinally
equivalent to the neighborhood filter of $\infty$ in the one\nbd-point
compactification of the $\oml$\nbd-sized discrete space, yet only the
former point has an $\om_1$\nbd-long escape sequence.
\end{example}

\begin{definition} Given a preorder $P$,
\begin{itemize}
\item $P$ is \emph{$\ka$-directed} if every $A\in[P]^{<\ka}$ 
has an upper bound in $P$;
\item $P$ is \emph{directed} if it is $\om$\nbd-directed;
\item the \emph{additivity} $\add{P}$ of $P$ is the least cardinal $\lm$ 
for which $P$ is not $\lm$-directed, if it exists;
\item if $P$ has a maximum, then $\add{P}=\infty$.
\end{itemize}
\end{definition}

\begin{remark}\
\begin{itemize}
\item $\add{P}$ is always $1$, $2$, $\infty$, or a regular infinite cardinal.
\item $P$ is cofinally equivalent to a regular infinite cardinal $\mu$
if and only if $\add{P}=\cf(P)=\mu$.
\item If $P\eqcf Q$, then $\add{P}=\add{Q}$ and $\cf(P)=\cf(Q)$. 
\end{itemize}
\end{remark}

\begin{lemma}\label{addpartition}
If $A$ is a preorder, $f\colon A\rightarrow B$, and $\card{B}<\add{A}$,
then $f$ is constant on a cofinal subset of $A$.
\end{lemma}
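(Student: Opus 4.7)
The plan is a short pigeonhole argument by contradiction, exploiting the definition of $\add{A}$ directly.

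Assume no fiber $f^{-1}(b)$ is cofinal in $A$. Then for each $b\in B$ I can choose a witness $a_b\in A$ such that no element of $f^{-1}(b)$ lies above $a_b$ in the preorder of $A$. The family $\{a_b:b\in B\}$ has cardinality at most $\card{B}$, which is strictly less than $\add{A}$. By the definition of additivity, $A$ is $\add{A}$-directed, so this family has an upper bound $a^{*}\in A$.

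Now let $b^{*}=f(a^{*})$. Then $a^{*}\in f^{-1}(b^{*})$ and $a^{*}\geq a_{b^{*}}$, which contradicts the defining property of $a_{b^{*}}$. Hence some fiber $f^{-1}(b)$ must be cofinal, and this is the desired cofinal subset on which $f$ is constant.

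The only ``obstacle'' is really a bookkeeping point: one must be careful that the case $\add{A}=\infty$ (i.e., $A$ has a maximum) is handled uniformly; in that case any cardinality bound $\card{B}<\infty$ is allowed and the maximum of $A$ lies in a single fiber which is trivially cofinal, so the same argument applies verbatim. No further machinery (from the excerpt or elsewhere) is needed.
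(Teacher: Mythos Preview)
Your proof is correct and essentially identical to the paper's. The paper phrases it slightly differently---it defines $N$ to be the set of $b\in B$ with non-cofinal fiber and shows directly that $N\neq B$ rather than assuming $N=B$ for a contradiction---but the core idea (choose a witness $a_b$ for each non-cofinal fiber, take an upper bound using additivity, and look at where that upper bound is sent by $f$) is the same.
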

\begin{proof}
Let $N$ denote the set of all $b\in B$ for which
the fiber $\inv{f}\{b\}$ is not cofinal in $A$.
For each $b\in N$, choose $a(b)\in A$ not bounded above 
by anything in $\inv{f}\{b\}$. 
Since $\card{B}<\add{A}$, there is an upper bound $c$
of $\{a(b):b\in N\}$. 
Since $f(c)$ cannot be in $N$, we have $N\not= B$.
\end{proof}

\begin{lemma}\label{escapegap}
If $\ka$ is a regular infinite cardinal, $D$ and $E$ are preorders,
$X$ is a space, $p\in X$, $\nbhd{p,X}\eqcf D\times E$, 
and $\cf(D)<\ka<\add{E}$, then $\ka\not\in\escape{p,X}$.
\end{lemma}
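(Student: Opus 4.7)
The plan is to argue by contradiction, so suppose $\vec U = \langle U_i : i < \ka\rangle$ is a $\ka$\nbd-long escape sequence at $p$. The essential order\nbd-theoretic content of the escape hypothesis is that $\{U_i : i < \ka\}$ is unbounded in $\nbhd{p,X}$: any neighborhood $W$ of $p$ contained in every $U_i$ would force $\bigcap_i U_i \supseteq W$ and hence be a neighborhood of $p$, contradicting the escape condition. From the hypothesis $\nbhd{p,X}\eqcf D\times E$ I would fix a Tukey (unboundedness\nbd-preserving) map $t\colon \nbhd{p,X}\to D\times E$ and write $(d_i,e_i):=t(U_i)$, so that $\{(d_i,e_i):i<\ka\}$ is unbounded in $D\times E$.

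The next step is to use $\cf(D)<\ka$ to trivialize the $D$\nbd-coordinate along a cofinal set of indices. Fix a cofinal $D_0\subseteq D$ of size $\cf(D)$, and choose $\phi\colon\ka\to D_0$ with $d_i\leq\phi(i)$ for each $i$. Since $\ka$ is a regular infinite cardinal and $|D_0|<\ka$, Lemma~\ref{addpartition} (or a direct pigeonhole on $\ka$) supplies a cofinal $I\subseteq\ka$ on which $\phi$ takes a single value $d^*\in D_0$. Because $I$ is cofinal in $\ka$ and $\vec U$ is $\supseteq$\nbd-decreasing, $\bigcap_{i\in I}U_i=\bigcap_{i<\ka}U_i$ is not a neighborhood of $p$, so $\langle U_i:i\in I\rangle$ is again a $\ka$\nbd-long escape sequence and $\{(d_i,e_i):i\in I\}$ remains unbounded in $D\times E$, while now $d_i\leq d^*$ throughout.

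To conclude, if some $e^*\in E$ bounded every $e_i$ with $i\in I$, then $(d^*,e^*)$ would bound $\{(d_i,e_i):i\in I\}$, contradicting the preceding sentence. Hence $\{e_i:i\in I\}$ is an unbounded subset of $E$ of size at most $\ka$, which directly contradicts $\ka<\add(E)$. The only conceptually substantive step is the opening translation of the topological escape condition into unboundedness in $\nbhd{p,X}$; everything after is a routine two\nbd-stage pigeonhole that plays $\cf(D)$ against $\add(E)$ across the factors of the rectangle $D\times E$.
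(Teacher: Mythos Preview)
Your proof is correct and follows essentially the same route as the paper's: both fix a cofinal copy of $D$ of size $\cf(D)$, pigeonhole the $D$\nbd-coordinates of the image of $\vec U$ onto a single $d^*$ along a cofinal $I\subseteq\ka$, and then use $\ka<\add{E}$ to bound the $E$\nbd-coordinates, contradicting unboundedness. The only cosmetic differences are that the paper works with a common preorder extending both $\nbhd{p,X}$ and $D\times E$ rather than a Tukey map, and it phrases the conclusion as ``every $\ka$\nbd-sequence in $\nbhd{p,X}$ has a bounded cofinal subsequence'' rather than arguing by contradiction from an escape sequence.
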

\begin{proof}
We may assume $\nbhd{p,X}$ and $D\times E$ are disjoint,
so there is a preordering $\trileq$ of 
$\nbhd{p,X}\cup(D\times E)$ that makes
$\nbhd{p,X}$ and $D\times E$ cofinal suborders.
Suppose that $\vec{U}$ is a $\ka$\nbd-long sequence in $\nbhd{p,X}$.
It suffices to show that $\vec{U}\restrict J$ is bounded for
some cofinal $J\subseteq\ka$.
Let $(d_i:i<\cf(D))$ enumerate a cofinal subset of $D$.
Every $j<\ka$ is such that $U_j\trileq (d_{i(j)},e_j)$ 
for some $i(j)<\cf(D)$ and $e_j\in E$. 
By Lemma~\ref{addpartition}, since $\cf(D)<\add{\ka}$, 
there must be some $l<\cf(D)$ such that
$i(j)=l$ for all $j$ in a cofinal subset $J$ of $\ka$.
Since $\ka<\add{E}$,  there exists $e\in E$ such that 
$U_j\trileq (d_l,e)$ for all $j\in J$.
\end{proof}

\begin{remark}\label{cfaddgap}
The above proof works if we replace $\nbhd{p,X}$ 
with an arbitrary preorder. In particular, 
if $\ka$ is regular infinite cardinal, $D$ and $E$ are nonempty preorders,
and $\cf(D)<\ka<\add{E}$, then $D\times\ka\times E\not\eqcf D\times E$
because $D\times\ka\times E$ has an unbounded increasing sequence 
of length $\ka$.
\end{remark}

\begin{lemma}\label{escaperofilled}
If $X$ is $T_3$, $p\in X$, $D$ and $E$ are preorders,
$\nbhd{p,X}\eqcf D\times E$, and $\cf(D)<\add{E}<\infty$,
then $\add{E}\in\escapero{p,X}$.
\end{lemma}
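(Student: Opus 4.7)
Let $\ka=\add{E}$, a regular infinite cardinal with $\cf(D)<\ka$. Since $E$ is $<\ka$-directed but has unbounded subsets of size $\ka$, fix an unbounded set $\{a_\alpha:\alpha<\ka\}\subseteq E$ and build by recursion an increasing unbounded sequence $(e_\alpha:\alpha<\ka)$ in $E$, taking $e_\alpha$ to be any upper bound of $\{e_\beta:\beta<\alpha\}\cup\{a_\alpha\}$ (which has size $<\ka=\add{E}$ and hence is bounded in $E$). As in the proof of Lemma~\ref{escapegap}, embed $\nbhd{p,X}$ and $D\times E$ as cofinal sub-preorders of a common directed preorder $(R,\trileq)$, and fix any $d_0\in D$. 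The sequence $((d_0,e_\alpha):\alpha<\ka)$ is then increasing and unbounded in $R$: any $\trileq$-upper bound would, by cofinality of $D\times E$, be dominated by some $(d,e)\in D\times E$, whose $E$-component would bound the $(e_\alpha)$, contradicting the construction.

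The aim is to build, by transfinite recursion on $\alpha<\ka$, a $\supseteq$-decreasing sequence $(U_\alpha:\alpha<\ka)$ of regular open neighborhoods of $p$ satisfying $U_\alpha\trigeq(d_0,e_\alpha)$ in $R$. The second condition, combined with the unboundedness of $((d_0,e_\alpha))$, forces $(U_\alpha)$ to be unbounded in $\nbhd{p,X}$, since any $V\in\nbhd{p,X}$ contained in every $U_\alpha$ would $\trileq$-dominate every $(d_0,e_\alpha)$. Consequently $\bigcap_\alpha U_\alpha$ is not a neighborhood of $p$ and $(U_\alpha)$ witnesses $\ka\in\escapero{p,X}$. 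At stage $\alpha$, given any neighborhood $V$ of $p$ with $V\subseteq\bigcap_{\beta<\alpha}U_\beta$ and $V\trigeq(d_0,e_\alpha)$, regularity ($T_3$) supplies a regular open $U_\alpha$ with $p\in U_\alpha\subseteq V$; by cofinality of $\nbhd{p,X}$ in $R$, the existence of $V$ reduces to showing that $\{U_\beta:\beta<\alpha\}\cup\{(d_0,e_\alpha)\}$ has a $\trileq$-upper bound in $R$.

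The main obstacle is verifying this boundedness at every stage, because $R$ need not be $<\ka$-directed: $\add(\nbhd{p,X})=\min(\add{D},\ka)$, which is strictly less than $\ka$ when $D$ has no maximum. The way to overcome this is to strengthen the inductive hypothesis so that each earlier $U_\beta$ additionally satisfies $U_\beta\trileq(d_0,e_{\beta+1})$ in $R$, i.e., its $R$-type lies between two consecutive fiber elements. Then for every $\beta<\alpha$ one has $U_\beta\trileq(d_0,e_{\beta+1})\trileq(d_0,e_\alpha)$, so $(d_0,e_\alpha)$ itself bounds the collection and the required $V$ exists. The technical heart of the proof is producing, at each stage, a regular open neighborhood of $p$ whose $R$-type lands in the designated ``fiber'' interval while simultaneously sitting inside the intersection of the previously chosen $U_\beta$'s; this is precisely where the interplay between the $T_3$ axiom (via regular open shrinks and closures) and the cofinal-equivalence structure is essential.
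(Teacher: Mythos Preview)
Your outline has a genuine gap at exactly the point you label ``the technical heart.'' You need, at each stage $\alpha$, a regular open $U_\alpha$ satisfying both $(d_0,e_\alpha)\trileq U_\alpha$ and $U_\alpha\trileq(d_0,e_{\alpha+1})$, i.e.\ a regular open whose $R$-type lands in the $d_0$-fiber between two prescribed $E$-levels. Nothing in the setup guarantees this. Cofinality of $\ro{p,X}$ in $R$ gives you a regular open $U\trigeq(d_0,e_\alpha)$, and cofinality of $D\times E$ gives you $(d',e')\trigeq U$, but there is no reason $d'$ should equal (or even be bounded by) your fixed $d_0$. In particular, your argument as written never uses the hypothesis $\cf(D)<\add(E)$ beyond the first sentence, and the lemma is false without it, so something essential is missing.

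The paper closes this gap with two ingredients you have not invoked. First, it uses the complete lattice structure of the regular open algebra to define, for each $(d,e)$, the $\trileq$-greatest regular open neighborhood $U(d,e)$ that is $\trileq$-below every regular open above $(d,e)$; this canonical choice is monotone in $(d,e)$ and is itself bounded above by some $(f(d,e),g(d,e))\in D\times E$. Second, and crucially, it applies the pigeonhole Lemma~\ref{addpartition} with $\card{D}=\cf(D)<\add(E)$: fixing one $d$, the map $e\mapsto f(d,e)$ must be constant (say $=b$) on a cofinal $E_0\subseteq E$. This is what produces a \emph{single} $D$-coordinate $b$ such that cofinally many $U(d,e)$'s are $\trileq$-below points of the form $(b,\cdot)$. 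One then runs the recursion along the $b$-fiber rather than the $d_0$-fiber, interleaving the $e_i$'s with the values $g(d,e_i)$ so that $(d,e_i)\trileq U(b,e_{i+1})$. Your sketch could be repaired along these lines, but the repair is not cosmetic: the pigeonhole step is where $\cf(D)<\add(E)$ actually does its work, and the completeness of the regular open lattice is what makes the canonical $U(d,e)$ available.
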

\begin{proof}
We may assume $\nbhd{p,X}$ and $D\times E$ are disjoint,
so there is a preordering $\trileq$ of $\nbhd{p,X}\cup(D\times E)$ 
that makes $\nbhd{p,X}$ and $D\times E$ cofinal suborders.
Since $X$ is $T_3$, the regular open neighborhoods of $p$ also
form a cofinal suborder---call it $\ro{p,X}$. 
For each $(d,e)\in D\times E$, let $U(d,e)$ be the smallest 
(\ie\ $\trileq$\nbd-greatest) regular open neighborhood of $p$ 
that contains (\ie\ is $\trileq$\nbd-below) every $V\in\ro{p,X}$
satisfying $(d,e)\trileq V$. 
For every $(d,e)\in D\times E$, there exist $V\in\ro{p,X}$ 
and $(d',e')\in D\times E$ such that $(d,e)\trileq V\trileq (d',e')$, 
which implies $(d,e)\trileq U(d',e')$.
So, choose $(f,g)\colon D\times E\rightarrow D\times E$ such that
$(d,e)\trileq U((f,g)(d,e))$ for all $(d,e)\in D\times E$.

By replacing $D$ with a cofinal subset if necessary,
\wma\ that $\card{D}=\cf(D)$. Fix $d\in D$.
By Lemma~\ref{addpartition}, since $\card{D}<\add{E}$, 
there is a cofinal subset $E_0$ of $E$ 
such that $f(d,e)=f(d,e')$ for all $e,e'\in E_0$; 
let $b=f(d,e)$ for any $e\in E_0$.
For each $e\in E$, let $G(e)=g(d,e')$ for some
$e'\in E_0$ where $e\leq e'$; we then have
$(d,e)\trileq U(b,G(e))$ for all $e\in E_0$.
Set $\ka=\add{E}$; choose $A=\{a_i:i<\ka\}\subseteq E$ such
that $A$ is unbounded in $E$. Choose $(e_i:i<\ka)$ such that 
$\{a_i,e_j,G(e_j)\}\leq e_i\in E_0$ for all $j<i<\ka$.
By construction, $\vec{e}$ is increasing, so
$\vec{V}=(U(b,e_i):i<\ka)$ is increasing in $\ro{p,X}$. 
Also by construction, $\vec{e}$ is unbounded in $E$ and
$(d,e_i)\trileq U(b,G(e_i))\trileq U(b,e_{i+1})$
for all $i<\ka$, so $\vec{V}$ is unbounded in $\ro{p,X}$.
Thus, $\vec{V}$ is a regular open escape sequence at $p$
with length $\add{E}$.
\end{proof}

\begin{remark}
The above proof works if we replace the regular open neighborhoods
of $p$ with an arbitrary complete lattice with its top removed.
\end{remark}

\begin{theorem}\label{escaperect}
If $\ka_0<\cdots<\ka_n$ are regular infinite cardinals, $X$ is $T_3$, 
$p\in X$, and $\nbhd{p,X}\eqcf\prod_{i\leq n}\ka_i$, then
$$
\escape{p,X}\cap\Reg=\escapero{p,X}\cap\Reg=\{\ka_0,\ldots,\ka_n\}.
$$
\end{theorem}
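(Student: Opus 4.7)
The plan is to apply Lemmas~\ref{escapegap} and~\ref{escaperofilled} to suitable factorizations of $\prod_{i\leq n}\ka_i$. Since every regular open neighborhood is a neighborhood, $\escapero{p,X}\subseteq\escape{p,X}$ holds automatically, so it suffices to prove the two inclusions $\{\ka_0,\ldots,\ka_n\}\subseteq\escapero{p,X}$ and $\escape{p,X}\cap\Reg\subseteq\{\ka_0,\ldots,\ka_n\}$. Throughout, I will use the routine fact that a finite product of regular infinite cardinals $\mu_0<\cdots<\mu_k$ has additivity $\mu_0$ and cofinality $\mu_k$, together with the conventions that a trivial one-element preorder has cofinality $1$ and additivity $\infty$.

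For the first inclusion, fix $j\leq n$ and split the product as $D\times E$ with $D=\prod_{i<j}\ka_i$ (the trivial preorder when $j=0$) and $E=\prod_{j\leq i\leq n}\ka_i$. Then $\cf(D)<\ka_j=\add{E}<\infty$, so Lemma~\ref{escaperofilled} yields $\ka_j=\add{E}\in\escapero{p,X}$.

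For the second inclusion, let $\ka\in\Reg$ be infinite with $\ka\notin\{\ka_0,\ldots,\ka_n\}$. I would find a factorization of $\prod_{i\leq n}\ka_i$ as $D\times E$ satisfying $\cf(D)<\ka<\add{E}$ and then invoke Lemma~\ref{escapegap} to conclude that $\ka\notin\escape{p,X}$. The splitting is chosen so that $\ka$ falls in the gap between $\cf(D)$ and $\add{E}$: when $\ka<\ka_0$, take $D$ trivial and $E$ the full product; when $\ka_j<\ka<\ka_{j+1}$ for some $j<n$, take $D=\prod_{i\leq j}\ka_i$ and $E=\prod_{j<i\leq n}\ka_i$; when $\ka>\ka_n$, take $D$ the full product and $E$ trivial, relying on $\add{E}=\infty$.

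There is no real obstacle; the argument is essentially a bookkeeping exercise of splitting the product at the gap that separates $\ka$ from the $\ka_i$'s. The only care required is with the boundary cases (either the $D$ factor or the $E$ factor collapses to a point), which are handled by the conventions on $\cf$ and $\add$ of a trivial preorder recorded above.
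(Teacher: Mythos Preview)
Your proof is correct and follows essentially the same approach as the paper's: both directions are obtained by applying Lemmas~\ref{escapegap} and~\ref{escaperofilled} to the factorization of $\prod_{i\leq n}\ka_i$ that splits the $\ka_i$ below and above the cardinal in question. The paper writes this splitting uniformly as $D=\prod(\{\ka_i\}\cap\lm)$ and $E=\prod(\{\ka_i\}\setminus\lm)$, whereas you spell out the three boundary cases explicitly, but this is purely presentational.
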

\begin{proof}
For each $i\leq n$, $\ka_i\in\escapero{p,X}$
by Lemma~\ref{escaperofilled} with 
$D=\prod_{j<i}\ka_j$ and $E=\prod_{i\leq j\leq n}\ka_j$.
(Note that $\prod\varnothing=\{\varnothing\}=1$.)
If $\lm$ is a regular infinite cardinal not equal to any $\ka_i$,
then $\lm\not\in\escape{p,X}$ by Lemma~\ref{escapegap} with 
$D=\prod(\{\ka_i:i<n\}\cap\lm)$ and
$E=\prod(\{\ka_i:i<n\}\setminus\lm)$.
\end{proof}

Theorem~\ref{t3rectcell} immediately follows from
Theorems~\ref{escapecell} and \ref{escaperect}.

\section{Skinny rectangles}\label{skinny}

To show that $\om\times\om_2$ cannot be cofinally equivalent 
to every neighborhood filter of a compactum, we use free sequences.
Recall that a transfinite sequence $\vec{p}$ in a space $X$ is
\emph{free} if $\closure{\{p_j:j<i\}}$ and $\closure{\{p_j:j\geq i\}}$
are disjoint for all $i$. Also, recall that $\hatfree{X}$ is
the least cardinal $\ka$ such that 
$X$ has no free sequence of length $\ka$.

\begin{lemma}\label{freeescape}
If $X$ is a compactum, $\alpha$ is a limit ordinal, and
$X$ has a free sequence of length $\alpha$, then
$X$ has an escape sequence of length $\alpha$ at some point.
\end{lemma}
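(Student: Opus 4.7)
The plan is to find a common accumulation point of the free sequence and build neighborhoods around it by cutting away initial segments. Let $(p_j:j<\alpha)$ be the given free sequence. For each $i<\alpha$, set $T_i=\closure{\{p_j:j\geq i\}}$. Since $\alpha$ is a limit ordinal, each $T_i$ is nonempty, and the family $\{T_i:i<\alpha\}$ is decreasing, hence has the finite intersection property. By compactness, fix $p\in\bigcap_{i<\alpha}T_i$. The freeness hypothesis gives $T_i\cap\closure{\{p_j:j<i\}}=\varnothing$ for each $i$, so the open set $U_i:=X\setminus\closure{\{p_j:j<i\}}$ is a neighborhood of $p$. Clearly $(U_i:i<\alpha)$ is decreasing.

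The remaining (and only genuine) task is to show that $\bigcap_{i<\alpha}U_i$ is not a neighborhood of $p$. Since $p\in T_0=\closure{\{p_j:j<\alpha\}}$, every open neighborhood $V$ of $p$ contains some $p_k$ with $k<\alpha$. Because $\alpha$ is a limit ordinal, $k+1<\alpha$, so $U_{k+1}$ is a term of our sequence; but $U_{k+1}$ is disjoint from $\closure{\{p_j:j<k+1\}}\ni p_k$, so $V\not\subseteq U_{k+1}$, and therefore $V\not\subseteq\bigcap_{i<\alpha}U_i$. This rules out the existence of any open $V\ni p$ inside $\bigcap_i U_i$, so $(U_i:i<\alpha)$ is an escape sequence of length $\alpha$ at $p$.

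The two places where hypotheses are used are compactness (to produce the common point $p$ of the tails $T_i$) and the limit-ordinal hypothesis on $\alpha$ (to guarantee that $k+1$ remains an index in the sequence, so that the intersection doesn't collapse to a single term of the chain). Neither step looks like a real obstacle; the argument is essentially the standard observation that in a compactum, a free sequence accumulates at a point where its initial-segment closures fail to absorb a neighborhood.
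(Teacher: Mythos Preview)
Your proof is correct and follows exactly the same construction as the paper: pick $p$ in the intersection of the tail closures and set $U_i=X\setminus\closure{\{p_j:j<i\}}$. The paper's proof is terser and simply asserts that $\vec{U}$ is an escape sequence, whereas you spell out the verification that $\bigcap_i U_i$ fails to be a neighborhood of $p$; your added detail (and the explicit use of the limit-ordinal hypothesis to ensure $k+1<\alpha$) is fine and accurate.
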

\begin{proof}
Let $\vec{p}$ be a free sequence of length $\alpha$ in $X$.
Choose $q\in\bigcap_{i<\alpha}\closure{\{p_j:j\geq i\}}$.
Let $U_i=X\setminus\closure{\{p_j:j<i\}}$ for each $i<\alpha$,
so that $\vec{U}$ is an escape sequence at $q$.
\end{proof}

\begin{corollary}\label{freespectrum}
If $X$ is a compactum, then $\bigcup_{p\in X}\escape{p,X}$
includes every infinite cardinal less than $\hatfree{X}$.
\end{corollary}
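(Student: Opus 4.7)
The plan is to reduce the corollary directly to Lemma~\ref{freeescape}. Fix an infinite cardinal $\kappa<\hatfree{X}$. By the definition of $\hatfree{X}$ as the least cardinal such that $X$ has no free sequence of that length, the hypothesis $\kappa<\hatfree{X}$ tells us that $X$ \emph{does} admit a free sequence of length $\kappa$. Since $\kappa$ is an infinite cardinal, it is in particular a limit ordinal, so Lemma~\ref{freeescape} applies and produces a point $q\in X$ together with a $\kappa$-long escape sequence at $q$. By the definition of $\escape{q,X}$, this means $\kappa\in\escape{q,X}$, and hence $\kappa\in\bigcup_{p\in X}\escape{p,X}$.

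That is essentially the entire argument: the corollary is a quantifier-shuffle from Lemma~\ref{freeescape}, running over the infinite cardinals below $\hatfree{X}$ one at a time and noting that each is automatically a limit ordinal. There is no real obstacle to identify; the only thing to check carefully is that the definition of $\hatfree{X}$ is being used correctly (namely that $\kappa<\hatfree{X}$ guarantees a free sequence of length exactly $\kappa$, not merely of some length below $\hatfree{X}$), and that infinite cardinals are limit ordinals so that the limit-ordinal hypothesis of Lemma~\ref{freeescape} is met. Both points are immediate.
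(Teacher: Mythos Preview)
Your proof is correct and matches the paper's approach: the corollary is stated without proof immediately after Lemma~\ref{freeescape}, so the intended argument is precisely the direct application of that lemma to each infinite cardinal $\kappa<\hatfree{X}$ that you wrote out.
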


\begin{lemma}\label{escapefree}
If $X$ is a compactum, $p\in X$,
and $\ka\in\escape{p,X}\cap\Reg$,
then $X$ has a free sequence of length $\ka$.
\end{lemma}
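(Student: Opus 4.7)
The plan is to show that the tightness at $p$ is at least $\ka$ and then invoke the classical theorem of \shap\ that in a compactum any point of tightness $\geq\ka$ (for $\ka$ regular) lies in the closure of a free sequence of length $\ka$.

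First I would normalize $\vec U$. Replacing each $U_i$ by $\interior U_i$ preserves the escape property, since $\interior\bigl(\bigcap_i \interior U_i\bigr)\subseteq\interior\bigl(\bigcap_i U_i\bigr)$ and the latter does not contain $p$; so \wma\ each $U_i$ is open. Since $\ka$ is regular and $\vec U$ cannot be eventually constant, we may further thin $\vec U$ so that it is strictly decreasing.

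Next, set $A=X\setminus\bigcap_{i<\ka}U_i=\bigcup_{i<\ka}(X\setminus U_i)$. Because $\bigcap U_i$ is not a neighborhood of $p$, every open neighborhood of $p$ meets $A$, so $p\in\closure{A}$. On the other hand, for any $B\subseteq A$ with $\card{B}<\ka$, each $b\in B$ lies in some $X\setminus U_{i(b)}$, and by the regularity of $\ka$ the supremum $i^*=\sup_{b\in B}i(b)$ is less than $\ka$; hence $B\subseteq X\setminus U_{i^*}$, which is closed and disjoint from the open neighborhood $U_{i^*}$ of $p$. So $p\notin\closure{B}$, witnessing $\tight{p,X}\geq\ka$.

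Finally, the theorem of \shap, applied to $X$ and $p$, produces a free sequence of length $\ka$ in $X$. The main content is the tightness lower bound, where the regularity of $\ka$ bounds the indices $i(b)$ of any witnessing subset of $A$; the hard part, namely turning that tightness into an honest free sequence (which must surmount the limit-stage obstacle of threading neighborhoods of $p$ through a transfinite recursion without losing the escape property), is exactly what \shap's compact-space construction accomplishes.
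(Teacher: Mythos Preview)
Your argument is correct but follows a different route from the paper. You observe that $A=X\setminus\bigcap_{i<\ka} U_i$ witnesses $\hattight{X}>\ka$ and then invoke the classical free-sequence theorem (stated later in the paper as Lemma~\ref{freetight}) as a black box. The paper instead builds the free sequence directly from the escape sequence: it chooses closed neighborhoods $V_i\subseteq U_i$, thins so that $U_i\not\supseteq\bigcap_{j\in\sigma}V_j$ for every $\sigma\in[i]^{<\om}$, and then by compactness picks $x_i\in\bigcap_{j\leq i}V_j\setminus U_{i+1}$; freeness follows because $\{x_j:j<i\}\subseteq X\setminus U_i$ and $\{x_j:j\geq i\}\subseteq V_i$. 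Your reduction is clean and the tightness witness is a nice observation in its own right; the paper's construction is self-contained, avoids a forward reference to Lemma~\ref{freetight}, and makes visible that the free sequence can be taken to live along the escape sequence and accumulate at $p$. One attribution quibble: the implication from large tightness to long free sequences in compacta is normally credited to \arhan\ rather than \shap.
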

\begin{proof}
Let $(U_i:i<\ka)$ be an escape sequence at $p$.
If we replace each $U_i$ with its interior,
then $\vec{U}$ remains an escape sequence at $p$,
so \wma\ that each $U_i$ is open.
For each each $i<\ka$, choose a closed $V_i\in\nbhd{p,X}$
such that $V_i\subseteq U_i$.
For each $\sigma\in[\ka]^{<\om}$, there exists $i<\ka$ such that
$U_i\not\supseteq\bigcap_{j\in\sigma}V_j$. Since $\ka$
is regular, \wma\ that we have thinned out $\vec{U}$ 
such that $U_i\not\supseteq\bigcap_{j\in\sigma}V_j$ 
for all $i<\ka$ and $\sigma\in[i]^{<\om}$.
(Hence, $((V_i,U_i):i<\ka)$ is free sequence of regular
pairs in the sense of~\cite{todorfree}.)
By compactness, there exists $(x_i:i<\ka)$ such that
$x_i\in\bigcap_{j\leq i}V_j\setminus U_{i+1}$.
Moreover, $\vec{x}$ is free because $\{x_j:j<i\}$ is
contained in $X\setminus U_i$ and $\{x_j:j\geq i\}$ is
contained in $V_i$.
\end{proof}

\begin{theorem}\label{nongappoint}
If $X$ is a compactum and $\ka$ is a regular infinite cardinal, 
then $X$ has a neighborhood filter that is 
not cofinally equivalent to any 
$D\times E$ where $\cf(D)<\ka<\add{E}<\infty$.
\end{theorem}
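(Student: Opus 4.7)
My plan is to argue by contradiction, assuming every point admits the forbidden rectangular decomposition and playing the upper bound on $\hatfree{X}$ extracted from Lemma~\ref{escapegap} against the lower bound extracted from Lemma~\ref{escaperofilled}. Specifically, suppose that for every $p\in X$ there exist preorders $D_p, E_p$ with $\cf(D_p)<\ka<\add{E_p}<\infty$ such that $\nbhd{p,X}\eqcf D_p\times E_p$.

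First I would obtain the upper bound. Since $X$ is a compactum, it is $T_3$, so Lemma~\ref{escapegap}, applied at every $p\in X$ to the decomposition $D_p\times E_p$, yields $\ka\notin\escape{p,X}$. Thus $\ka\notin\bigcup_{p\in X}\escape{p,X}$, and the contrapositive of Corollary~\ref{freespectrum} forces $\hatfree{X}\leq\ka$.

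Next I would produce the conflicting lower bound. Pick any $p\in X$ and set $\mu=\add{E_p}$. By hypothesis $\ka<\mu<\infty$, and by the dichotomy in the remark after the definition of $\add{\cdot}$ (namely that $\add{\cdot}$ is $1$, $2$, $\infty$, or a regular infinite cardinal), $\mu$ must be a regular infinite cardinal strictly greater than $\ka$. Lemma~\ref{escaperofilled} then gives $\mu\in\escapero{p,X}\subseteq\escape{p,X}$, and Lemma~\ref{escapefree} (whose hypotheses require exactly compactness plus $\mu\in\escape{p,X}\cap\Reg$) supplies a free sequence in $X$ of length $\mu$. Hence $\hatfree{X}>\mu>\ka$, contradicting $\hatfree{X}\leq\ka$.

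I do not anticipate a real obstacle: the preceding lemmas package exactly what is needed, and the only micro-step to check is the basic fact that $\add{E_p}$, being larger than $\om$ yet smaller than $\infty$, is automatically a regular infinite cardinal, so Lemma~\ref{escapefree} is applicable. The conceptual heart of the argument is the precise \emph{gap} $\cf(D_p)<\ka<\add{E_p}$: the left side of the gap is what makes Lemma~\ref{escapegap} forbid $\ka$-long escape sequences, while the right side is what forces an escape sequence of length strictly greater than $\ka$ by Lemma~\ref{escaperofilled}, and it is precisely this mismatch that produces the contradiction on $\hatfree{X}$.
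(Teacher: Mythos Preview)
Your proof is correct and uses exactly the same four ingredients as the paper's proof (Lemmas~\ref{escapegap}, \ref{escaperofilled}, \ref{escapefree}, and \ref{freeescape}/Corollary~\ref{freespectrum}); the only cosmetic difference is that you frame the argument as a global contradiction on $\hatfree{X}$, whereas the paper argues directly: from a single $p$ with the decomposition it produces a free sequence of length $\ka$, then via Lemma~\ref{freeescape} finds a point $q$ with a $\ka$-long escape sequence, which by Lemma~\ref{escapegap} cannot have the decomposition.
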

\begin{proof}
Suppose that $p\in X$ and $\nbhd{p,X}$ is cofinally equivalent 
to some $D_p\times E_p$ where $\cf(D_p)<\ka<3Q2A4E\add{E_p}<\infty$.
By Lemma~\ref{escaperofilled}, $\add{E_p}\in\escapero{p,X}$;
by Lemma~\ref{escapefree}, $X$ has a free sequence of
length $\add{E_p}$, so $X$ has a free sequence of length $\ka$.
By Lemma~\ref{freeescape}, $X$ has an escape sequence 
of length $\ka$ at some point $q$. 
By Lemma~\ref{escapegap}, $\nbhd{q,X}$ is not
cofinally equivalent to any $D_q\times E_q$ where
$\cf(D_q)<\ka<\add{E_q}$.
\end{proof}

\begin{corollary}
Every compactum has a neighborhood filter that is 
not cofinally equivalent to $\om\times\om_2$.
\end{corollary}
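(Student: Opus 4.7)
The plan is to invoke Theorem~\ref{nongappoint} directly, with a carefully chosen regular cardinal $\ka$ that separates $\cf(\om)$ from $\add(\om_2)$. Since $\om \times \om_2$ fits the template $D \times E$ with $D = \om$ and $E = \om_2$, I need a regular infinite cardinal $\ka$ satisfying $\cf(D) < \ka < \add(E) < \infty$.

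Taking $\ka = \om_1$ does the job: $\cf(\om) = \om < \om_1 < \om_2 = \add(\om_2) < \infty$, where the additivity computation uses the fact that $\om_2$ as a preorder is cofinally equivalent to itself as a regular cardinal, so $\add(\om_2) = \om_2$. Theorem~\ref{nongappoint} applied with this $\ka$ then produces, for any compactum $X$, a point $q \in X$ whose neighborhood filter $\nbhd{q,X}$ is not cofinally equivalent to any $D_q \times E_q$ with $\cf(D_q) < \om_1 < \add(E_q) < \infty$. In particular it is not cofinally equivalent to $\om \times \om_2$.

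There is no real obstacle here: the corollary is essentially packaged by Theorem~\ref{nongappoint}, which was proved precisely to rule out such ``skinny rectangles'' of cofinal type. The only thing to double-check is that the regular cardinal $\ka$ can be chosen strictly between $\cf(\om) = \om$ and $\add(\om_2) = \om_2$, which is exactly why the gap $\om_2 > \om^+$ in the indices of $\om \times \om_2$ is needed --- this is the ``skinny'' condition motivating the section title.
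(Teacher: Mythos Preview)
Your proof is correct and is exactly the intended argument: the paper states this corollary immediately after Theorem~\ref{nongappoint} without a separate proof, so applying that theorem with $\ka=\om_1$ (noting $\cf(\om)=\om<\om_1<\om_2=\add{\om_2}<\infty$) is precisely what is meant.
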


The remainder of this section is devoted to proving
a stronger version of Corollary~\ref{freespectrum} for
homogeneous compacta in models of GCH.

\begin{definition}
Let the \emph{strict tightness} $\hattight{X}$ of a space $X$ 
be the least cardinal $\ka$ such that for every 
$A\subseteq X$  and $p\in\closure{A}$, we have 
$p\in\closure{B}$ for some $B\in[A]^{<\ka}$. 
The \emph{tightness} $\tight{X}$ of $X$ defined the same way,
except that we replace $[A]^{<\ka}$ with $[A]^{\leq\ka}$.
\end{definition}

Lemma~\ref{freetight} is due to \arhan\ and \shap\ 
(see~\cite{arhantightfree} or~\cite[Thm. 4.20]{monk})
for the case where $\ka$ is a successor cardinal (because the result
is stated in terms of tightness, not strict tightness).
However, as noted after Theorem 4.20 in~\cite{monk},
the proof clearly works for arbitrary regular infinite $\ka$.

\begin{lemma}\label{freetight}
If $X$ is a compactum and $\ka$ is a regular infinite cardinal,
then $\hattight{X}\leq\ka$ if and only if 
$X$ has no free sequence of length $\ka$.
\end{lemma}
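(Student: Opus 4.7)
The statement is the compact-space \arhan--\shap\ free-sequence theorem rephrased in the language of strict tightness; I would prove the two implications separately.

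The easy implication is that a free sequence of length $\ka$ produces $\hattight{X}>\ka$. Given such a sequence $\vec{p}=(p_i:i<\ka)$ in $X$, compactness ensures $\bigcap_{i<\ka}\closure{\{p_j:j\geq i\}}\neq\varnothing$; fix $q$ in this intersection and set $A=\{p_i:i<\ka\}$, so $q\in\closure{A}$. For any $B\in[A]^{<\ka}$, the regularity of $\ka$ yields $i<\ka$ with $B\subseteq\{p_j:j<i\}$, and freeness forces $\closure{B}$ to be disjoint from the tail closure containing $q$; hence $q\notin\closure{B}$, so the pair $(A,q)$ witnesses $\hattight{X}>\ka$.

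The hard implication is that $\hattight{X}>\ka$ produces a free sequence of length $\ka$. Assume $A\subseteq X$ and $p\in\closure{A}$ satisfy $p\notin\closure{B}$ for every $B\in[A]^{<\ka}$. I would build $(p_i)_{i<\ka}$ in $A$ by recursion, selecting at each stage $i$ an open neighborhood $V_i$ of $p$ with $\closure{V_i}\cap\closure{\{p_j:j<i\}}=\varnothing$ (available by normality of the compactum $X$, since $p\notin\closure{\{p_j:j<i\}}$ and $\{p_j:j<i\}\in[A]^{<\ka}$), and taking $p_j\in V_i$ for all $j\geq i$. With these invariants, $\closure{\{p_j:j\geq i\}}\subseteq\closure{V_i}$ is disjoint from $\closure{\{p_j:j<i\}}$, so the sequence is free.

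The main obstacle is selecting $p_i$ at limit stages: the intersection $\bigcap_{l\leq i}V_l$ contains $p$ but need not be a neighborhood of $p$, so $p\in\closure{A}$ does not immediately supply an element of $A\cap\bigcap_{l\leq i}V_l$. This is exactly the delicate point handled by the classical \arhan--\shap\ argument, which coherently pre-selects $(p_i,V_i)$ using an elementary submodel of size $\ka$ (or equivalent closing-off), exploiting the fact that any failure would reflect into a subset of $A$ of size $<\ka$ whose closure contains $p$. The argument is carried out in \cite[Thm.~4.20]{monk} for successor $\ka$ and stated in terms of tightness, but as noted immediately after that theorem it uses only the regularity of $\ka$ and transfers verbatim to arbitrary regular infinite $\ka$ and to the strict-tightness formulation.
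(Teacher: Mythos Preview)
Your proposal is correct and aligns with the paper's treatment: the paper does not actually give its own proof of this lemma but simply attributes it to \arhan\ and \shap, cites \cite{arhantightfree} and \cite[Thm.~4.20]{monk}, and notes (as you do) that the argument there, while stated for tightness and successor cardinals, goes through verbatim for strict tightness and arbitrary regular infinite $\ka$. Your write-up adds a helpful sketch of the easy direction and an honest account of where the real work lies in the hard direction, but ultimately defers to the same source for the delicate closing-off argument---exactly what the paper does.
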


\begin{definition} Given a space $X$, 
$\pi\character{X}=\sup_{p\in X}\pi\character{p,X}$.
\end{definition}

\begin{lemma}[\shap]\label{freepichar}
If $X$ is a compactum and $p\in X$, then
$X$ has a free sequence of length $\pi\character{p,X}$.
\end{lemma}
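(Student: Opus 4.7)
The plan is to construct a free sequence $\langle x_\alpha : \alpha<\kappa\rangle$, where $\kappa=\pi\character{p,X}$, by transfinite recursion of length $\kappa$. At stage $\alpha$, I would pick an open neighborhood $V_\alpha$ of $p$ and a point $x_\alpha\in V_\alpha\setminus\{p\}$ satisfying the invariants
\begin{enumerate}
\item $V_\alpha\subseteq V_\beta$ for every $\beta<\alpha$, and
\item $\overline{V_\alpha}\cap\overline{\{x_\beta:\beta<\alpha\}}=\emptyset$.
\end{enumerate}
If the recursion runs through all $\alpha<\kappa$, the free-sequence property is immediate: for each $\alpha$, invariant (1) forces $\{x_\beta:\beta\geq\alpha\}\subseteq V_\alpha$ and hence $\overline{\{x_\beta:\beta\geq\alpha\}}\subseteq\overline{V_\alpha}$, which is disjoint from $\overline{\{x_\beta:\beta<\alpha\}}$ by (2).

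The main step is to verify that the recursion does not stall before stage $\kappa$. Stalling at some $\alpha<\kappa$ could mean either that $p\in\overline{\{x_\beta:\beta<\alpha\}}$ (defeating the use of normality to achieve (2)) or that $\bigcap_{\beta<\alpha}V_\beta$ fails to be a neighborhood of $p$ (defeating (1) at a limit). Both kinds of obstruction would, I plan to show, produce from the $|\alpha|<\kappa$-sized partial data a local $\pi$-base at $p$ of cardinality less than $\kappa$, contradicting $\pi\character{p,X}=\kappa$. A useful observation is that, by (2) applied at stage $\beta+1$, each $\overline{V_{\beta+1}}$ misses $x_\beta$, so the ``shells'' $V_\beta\setminus\overline{V_{\beta+1}}$ are pairwise disjoint nonempty open sets containing $x_\beta$. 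If $\bigcap_{\beta<\alpha}V_\beta$ lacks $p$ in its interior, a compactness argument on these shells forces them to $\pi$-base $p$; if instead $p$ lies in the closure of the $x_\beta$'s, one uses regularity to trim open neighborhoods around each $x_\beta$ inside preassigned neighborhoods of $p$ and again extracts a $\pi$-base of size $|\alpha|$.

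The hardest part will be making this ``$\pi$-base extraction'' step fully rigorous at limit $\alpha$, where controlling cluster points of the nested family $\{V_\beta:\beta<\alpha\}$ requires a careful use of compactness together with passage to regular open approximations. If the argument gets unwieldy in the singular case, a cleaner shortcut for regular $\kappa$ is to appeal to Shapirovski\u\i's classical inequality $\pi\character{p,X}\leq\tight{p,X}$ in compacta, which yields a set $A\subseteq X$ with $p\in\overline{A}$ and $p\notin\overline{B}$ for every $B\in[A]^{<\kappa}$, and then build the free sequence by choosing $x_\alpha\in A$ inside shrinking separating neighborhoods as above; this also lets one invoke Lemma~\ref{freetight} directly to conclude the existence of the free sequence of length $\kappa$ from the tightness lower bound.
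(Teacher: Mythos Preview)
Your main approach has a genuine gap: the nested recursion can stall at a limit $\alpha<\kappa$ without the shells $V_\beta\setminus\closure{V_{\beta+1}}$ forming a local $\pi$-base at $p$. Take $X=(\omega_1+1)\times 2^\omega$ and $p=(\omega_1,q)$; here $\pi\character{p,X}=\omega_1$. Your invariants (1) and (2) permit the choices $V_n=(0,\omega_1]\times U_n$ with $(U_n)_{n<\omega}$ a decreasing clopen base at $q$ in $2^\omega$, and $x_n=(1,r_n)$ with $r_n\in U_n\setminus U_{n+1}$. At stage $\omega$ the set $\bigcap_n V_n=(0,\omega_1]\times\{q\}$ has empty interior, yet every shell $(0,\omega_1]\times(U_n\setminus U_{n+1})$ meets $\{1\}\times 2^\omega$, so no shell lies inside the neighborhood $(1,\omega_1]\times 2^\omega$ of $p$. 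Hence the shells are not a $\pi$-base, and the recursion cannot be rescued as you describe. The underlying defect is that nesting forces infinite intersections of neighborhoods at limit stages, and nothing in invariants (1)--(2) keeps those intersections large; no amount of ``compactness on the shells'' repairs this, because the shells simply point in the wrong direction.

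The argument the paper cites (Monk's proof of \shap's inequality, adapted via \todor's free sequences of regular pairs) avoids this by \emph{not} nesting. One builds pairs $(U_i,V_i)$ of open neighborhoods of $p$ with $\closure{U_i}\subseteq V_i$, and at stage $i$ the family of fewer than $\kappa$ open sets to be defeated as a potential $\pi$-base consists of all nonempty $W_{\sigma,\tau}=\bigcap_{j\in\sigma}U_j\setminus\bigcup_{j\in\tau}\closure{V_j}$ with $\sigma,\tau\in[i]^{<\omega}$; one then chooses $V_i$ so that $\closure{V_i}$ contains no such $W_{\sigma,\tau}$. Only \emph{finite} intersections of neighborhoods ever appear, so there is no limit-stage obstruction, and the free sequence is extracted by compactness from the sets $\bigcap_{j\leq i}\closure{U_j}\setminus\bigcup_{j>i}V_j$. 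Your fallback shortcut---invoke \shap's $\pi\character{p,X}\leq\tight{p,X}$ and then Lemma~\ref{freetight}---is essentially how the paper itself proceeds by citation, but note that it is somewhat circular (the lemma \emph{is} the localized \shap\ inequality) and that Lemma~\ref{freetight} as stated requires regular $\kappa$; the direct regular-pairs construction covers singular $\kappa$ as well.
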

\begin{proof}
This lemma is just a localized form of \shap's Theorem,
$\pi\character{X}\leq\tight{X}$~\cite{shap}.
See the proof of a boolean algebraic version of \shap's Theorem 
in~\cite[Thm. 11.8]{monk}; it uses so\nbd-called free sequences 
of clopen sets and shows that our lemma holds if $X$ is 
zero\nbd-dimensional. To adapt that proof to the general case, 
simply replace free sequences of clopen sets with \todor's 
notion of free sequences of regular pairs~\cite{todorfree}.
\end{proof}

\begin{definition}
The \emph{weight} $\weight{X}$ of $X$ is 
the minimum of the cardinalities of bases of $X$.
\end{definition}

The next lemma is due to De La Vega~\cite[Thm. 3.2]{delavega},
except that we extend it to handle the case where
$\lm$ is weakly inaccessible.

\begin{lemma}\label{freecard}
If $X$ is a homogeneous compactum, $\lm$ is a regular infinite cardinal,
and $X$ has no free sequences of length $\lm$, 
then $\card{X}\leq 2^{<\lm}$.
\end{lemma}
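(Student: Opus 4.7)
The plan is to split into two cases according to whether $\lambda$ is a successor cardinal or is weakly inaccessible. In either case, Lemma~\ref{freetight} converts the hypothesis into $\hattight{X}\leq\lambda$. When $\lambda=\kappa^+$, this gives $\tight{X}\leq\kappa$, and De La Vega's original theorem $\card{X}\leq 2^{\tight{X}}$ yields $\card{X}\leq 2^\kappa=2^{<\lambda}$; so in this case the lemma reduces immediately to the published result.

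The weakly inaccessible case is the nontrivial extension: when $\lambda$ is a regular limit cardinal, $\hattight{X}\leq\lambda$ permits $\tight{X}=\lambda$ itself, in which case De La Vega's bound $2^{\tight{X}}=2^\lambda$ exceeds $2^{<\lambda}$. I would re-run her argument with ``subsets of size $\leq\tight{X}$'' replaced throughout by ``subsets of size $<\lambda$''. The three ingredients of her proof are: (i) \shap's bound $\pi\character{X}\leq\tight{X}$, which here is localized via Lemma~\ref{freepichar} to yield $\pi\character{X}<\lambda$, since any point of $\pi$\nbd-character $\geq\lambda$ would supply a free sequence of length $\lambda$; (ii) a lifting to a character bound valid in homogeneous compacta, giving $\character{X}\leq 2^{<\lambda}$; and (iii) an \arhan\nbd-Pospi\v sil closing\nbd-off argument bounding $\card{X}$ in terms of $\character{X}$ and $\hattight{X}$.

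Concretely, for (iii) I would build an increasing chain $(M_\mu:\mu<\lambda)$ of elementary submodels of $\bigh$ for suitably large $\theta$, with $\lambda\cup\{X\}\subseteq M_0$, $\card{M_\mu}\leq 2^\mu$, and each $M_\mu$ closed under the operation $A\mapsto\bigcup\{\closure{B}:B\in[A]^{<\lambda}\cap M_\mu\}$. Since $\lambda$ is regular, the union $M=\bigcup_{\mu<\lambda}M_\mu$ has size at most $\sup_{\mu<\lambda}2^\mu=2^{<\lambda}$, and weak inaccessibility guarantees the arithmetic identity $(2^{<\lambda})^{<\lambda}=2^{<\lambda}$, so the closure conditions can be maintained inductively. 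A standard Hausdorff compactness argument then uses $\hattight{X}\leq\lambda$ to show $M\cap X$ is already closed, forcing $X\subseteq M$ and hence $\card{X}\leq 2^{<\lambda}$. The main obstacle is step (ii), the character bound: De La Vega's counting of local $\pi$\nbd-types must be uniformly refined so that, rather than counting subfamilies of a single fixed size $\tight{X}$, one counts the closures of subsets of arbitrary cardinalities $<\lambda$, and the homogeneity\nbd-based transfer from $\pi$\nbd-character to character must be checked to survive this refinement.
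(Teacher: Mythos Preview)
Your overall plan---handle the successor case by direct appeal to De La Vega's theorem, then for the weakly inaccessible case refine her argument by replacing ``$\leq\tight{X}$'' with ``$<\lambda$'' throughout---matches the paper's strategy. The difference lies in how the inaccessible case is executed.

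The paper does not bound $\character{X}$ and then run a separate closing\nbd-off argument. Instead it observes that De La Vega deduces her Theorem~3.2 ($\card{X}\leq 2^{\tight{X}}$) from her Theorem~3.1, which is a \emph{weight} bound: $\tight{X}\leq\kappa\Rightarrow\weight{X}\leq 2^\kappa$. The ``$<\lambda$'' refinement is applied to Theorem~3.1, yielding $\weight{X}\leq 2^{<\lambda}$ directly. The passage from Theorem~3.1 to Theorem~3.2 already goes through Van Mill's inequality $\card{X}\leq\weight{X}^{\pi\character{X}}$ (valid for all power homogeneous compacta) together with \shap's $\pi\character{X}\leq\tight{X}$. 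Since Lemma~\ref{freepichar} plus homogeneity give $\pi\character{X}<\lambda$, one obtains $\card{X}\leq(2^{<\lambda})^{<\lambda}=2^{<\lambda}$, the last equality needing only regularity of $\lambda$, not inaccessibility. So your ``main obstacle'' is bypassed: Van Mill's inequality uses $\pi$\nbd-character, not character, and the weight bound is precisely what the modified Theorem~3.1 supplies. Your step~(iii) then becomes unnecessary.

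Your sketch of~(iii) also has a technical slip: requiring $\lambda\cup\{X\}\subseteq M_0$ while $\card{M_\mu}\leq 2^\mu$ is inconsistent for small $\mu$; you presumably intend each $M_\mu$ to have size at most $2^{<\lambda}$.
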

\begin{proof}
First, the lemma is trivial when $\lm=\om$. Second,
by Lemma~\ref{freetight}, $\lm\geq\hattight{X}$. Therefore, if
$\lm$ is a successor cardinal, then Theorem 3.2 of~\cite{delavega},
which says $\card{X}\leq 2^{\tight{X}}$, implies $\card{X}\leq 2^{<\lm}$.
Finally, we can modify the proof of Theorem 3.2 of~\cite{delavega}
to show that $\card{X}\leq 2^{<\lm}$ without assuming $\lm$ is a
successor cardinal. 
In~\cite{delavega}, Theorem 3.2 is deduced from Theorem 3.1,
which assumes $\tight{X}\leq\ka$ and deduces $\weight{X}\leq 2^\ka$, 
where $\ka$ is an arbitrary infinite cardinal.
Thanks to regularity of $\lm$, we may safely respectively replace 
``$\leq\ka$'' and ``$2^\ka$'' with ``$<\lm$'' and ``$2^{<\lm}$'' 
throughout the statement and proof of Theorem 3.1. We also may safely
replace all sequences and sets of size $\ka$ with sequences and
sets of size less than $\lm$, and replace ``$\ka$\nbd-closed''
with ``$(<\lm)$\nbd-closed.'' These simple changes yield a proof of
$\weight{X}\leq 2^{<\lm}$. Therefore, it suffices to show that
$\card{X}\leq\weight{X}^{<\lm}$.

To deduce $\card{X}\leq 2^{\tight{X}}$ from $\weight{X}\leq 2^{\tight{X}}$,
De La Vega uses two inequalities,
$\card{X}\leq\weight{X}^{\pi\character{X}}$ and
$\pi\character{X}\leq\tight{X}$.
The first of these inequalities is due to Van Mill~\cite{vanmillcardphc} 
and it applies to all power homogeneous compacta.
(Using the same kinds of cosmetic changes as in the previous paragraph,
it is easy to check that his proof generalizes to
show that if $\pi\character{p,X}<\lm$ for all $p\in X$,
then $\card{X}\leq\weight{X}^{<\lm}$. However, 
since our $X$ is homogeneous, we do not need to make these changes.)
The second inequality localizes to Lemma~\ref{freepichar}, 
which implies $\pi\character{p,X}<\lm$ for all $p\in X$. 
Hence, $\card{X}\leq\weight{X}^{<\lm}$. 
\end{proof}

\begin{definition}\label{defchar}
Given a space $X$ and $A\subseteq X$,
\begin{itemize}
\item The \emph{character} $\character{A,X}$ of $A$ is the 
cofinality of the set of neighborhoods of $A$ (ordered by $\supseteq$);
\item the \emph{pseudocharacter} $\pseudocharacter{A,X}$ of $A$ is the
minimum of the cardinalities of families of neighborhoods of $A$
that have intersection $A$;
\item we abbreviate $\character{\{p\},X}$ by $\character{p,X}$
and $\pseudocharacter{\{p\},X}$ by $\pseudocharacter{p,X}$;
\item $\character{X}=\sup_{p\in X}\character{p,X}$.
\end{itemize}
\end{definition}
It is easily checked that $\character{A,X}=\pseudocharacter{A,X}$ 
whenever $A$ and $X$ are nested compacta.

The following lemma is due to \arhan~\cite{arhan}.
In~\cite{arhan}, it is stated in terms of tightness.
We state it as below for increased precision at limit cardinals.

\begin{lemma}\label{smallset}
If $\lm$ is an infinite cardinal, $X$ is a compactum, and
$X$ has no free sequence of length $\lm$, then
$X$ has a subset $A$ of size less than $\lm$ such that $\closure{A}$ 
has a nonempty closed subset $B$ such that $\pseudocharacter{B,X}<\lm$.
\end{lemma}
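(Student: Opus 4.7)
The plan is to prove the contrapositive: assuming that every $A \in [X]^{<\lambda}$ satisfies $\psi(B, X) \geq \lambda$ for every nonempty closed $B \subseteq \overline{A}$, I will construct a free sequence of length $\lambda$ in $X$, contradicting the hypothesis.

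The sequence is built by transfinite recursion on $\alpha < \lambda$, producing points $p_\alpha \in X$ together with open sets $W_\alpha$ that maintain the invariants $F_\alpha := \overline{\{p_\beta : \beta < \alpha\}} \subseteq W_\alpha$ and $p_\alpha \notin W_\gamma$ for every $\gamma \leq \alpha$. If these hold throughout, the resulting $(p_\alpha)_{\alpha < \lambda}$ is free: at each $\gamma$, the past $F_\gamma$ lies in the open $W_\gamma$ while the tail $\{p_\beta : \beta \geq \gamma\}$ lies in the disjoint closed set $X \setminus W_\gamma$, so $\overline{\{p_\beta : \beta \geq \gamma\}} \cap F_\gamma = \emptyset$.

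To extend at stage $\alpha$, set $Q_\alpha := \bigcap_{\gamma < \alpha}(X \setminus W_\gamma)$; the invariant ensures every finite sub-intersection contains the previously chosen point with the largest index, so $Q_\alpha$ is nonempty by compactness. A valid choice of $(W_\alpha, p_\alpha)$ then exists whenever $Q_\alpha \not\subseteq F_\alpha$: take any $y \in Q_\alpha \setminus F_\alpha$, use normality to separate $y$ from $F_\alpha$ by disjoint opens $V \ni y$ and $W_\alpha \supseteq F_\alpha$, and set $p_\alpha := y$.

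The main obstacle --- and the conceptual heart of the argument --- is forcing $Q_\alpha \not\subseteq F_\alpha$ at every stage. I plan to refine the construction to pick, at each stage $\gamma$, a closed ``buffer'' $K_\gamma$ with $F_\gamma \subseteq \interior(K_\gamma) \subseteq K_\gamma \subseteq W_\gamma$ (available by normality), arranged with enough coherence across stages that $\bigcup_{\gamma < \alpha} K_\gamma = \bigcup_{\gamma < \alpha} W_\gamma$. If the recursion ever stalled at some stage $\alpha$ with $Q_\alpha \subseteq F_\alpha$, then $Q_\alpha$ would be a nonempty closed subset of $\overline{\{p_\beta : \beta < \alpha\}}$, and $\{X \setminus K_\gamma : \gamma < \alpha\}$ would be a family of $|\alpha| < \lambda$ open neighborhoods of $Q_\alpha$ with intersection $X \setminus \bigcup_\gamma K_\gamma = X \setminus \bigcup_\gamma W_\gamma = Q_\alpha$, yielding $\psi(Q_\alpha, X) < \lambda$ and contradicting the hypothesis. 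Coordinating the buffers $K_\gamma$ to collectively cover whatever the $W_\gamma$'s do --- essentially a free-sequence-of-regular-pairs bookkeeping in the sense of~\cite{todorfree} --- is the delicate technical step I expect to be the hardest part of the proof.
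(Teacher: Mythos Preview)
Your contrapositive strategy and the shape of the recursion match the paper's argument. The gap is the one you flag yourself: you have not shown how to coordinate the buffers $K_\gamma$ so that $\bigcup_{\gamma<\alpha}K_\gamma=\bigcup_{\gamma<\alpha}W_\gamma$, and as stated this is not automatic---with $K_\gamma\subseteq W_\gamma$ chosen merely to engulf $F_\gamma$, there is no reason an arbitrary point of $W_\delta$ lands in any $K_\gamma$.

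The paper sidesteps this by working on the closed side rather than the open side. Instead of open $W_\gamma\supseteq F_\gamma$, it carries a \emph{decreasing} sequence of closed sets $C_j\ni x_j$ together with the explicit invariant $\psi(C_j,X)\le\omega+|j|$. This invariant propagates to limits for free: $C_{<i}:=\bigcap_{j<i}C_j$ is nonempty by compactness and satisfies $\psi(C_{<i},X)\le\omega+|i|<\lambda$, simply by taking the union of the earlier witnessing neighborhood families. The contrapositive hypothesis (with $A=\{x_j:j<i\}$ and $B=C_{<i}$) then forces $C_{<i}\not\subseteq\overline{\{x_j:j<i\}}$, so one picks $x_i\in C_{<i}\setminus\overline{\{x_j:j<i\}}$ and shrinks to a closed $G_\delta$ (in $C_{<i}$) set $C_i$ missing $\overline{\{x_j:j<i\}}$; this keeps $\psi(C_i,X)\le\omega+|i|$. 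No buffer bookkeeping is needed.

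Translated back into your framework: the fix is to take each $W_\gamma$ open $F_\sigma$, equivalently to take $X\setminus W_\gamma$ to be a closed $G_\delta$ around $p_\gamma$ disjoint from $F_\gamma$ (normality provides this). Then $\bigcup_{\gamma<\alpha}W_\gamma$ is already a union of $|\alpha|\cdot\omega<\lambda$ closed sets, those closed sets \emph{are} your buffers, and $\psi(Q_\alpha,X)<\lambda$ is immediate. Your ``hardest part'' dissolves once you constrain the $W_\gamma$'s, and what remains is precisely the paper's argument.
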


\begin{definition}\
\begin{itemize}
\item Given a space $X$, $\minchar{X}=\min_{p\in X}\character{p,X}$.
\item Given a cardinal $\lm$, $\log\lm=\min\{\ka:\lm\leq 2^\ka\}$.
\end{itemize}
\end{definition}

The next lemma is due to Juh\'asz~\cite{juhasz}.
Again, we state it differently 
for increased precision at limit cardinals.

\begin{lemma}\label{logminchar}
If $X$ is a compactum, $\lm$ is an infinite cardinal,
and $X$ has no free sequence of length $\lm$,
then $\log(\minchar{X})<\lm$.
\end{lemma}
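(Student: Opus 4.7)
By Lemma~\ref{freetight}, the hypothesis yields $\hattight{X}\leq\lambda$, and Lemma~\ref{smallset} then furnishes $A\in[X]^{<\lambda}$ together with a nonempty closed $B\subseteq\closure{A}$ satisfying $\pseudocharacter{B,X}<\lambda$. Since $B$ is compact and nested in the compactum $X$, the remark after Definition~\ref{defchar} gives $\character{B,X}=\pseudocharacter{B,X}$; set $\mu=\max(|A|,\character{B,X},\aleph_0)<\lambda$, and fix a decreasing neighborhood base $\{U_\alpha:\alpha<\mu\}$ of $B$ in $X$. My plan is to produce $p\in B$ with $\character{p,X}\leq 2^\mu$; this yields $\log\character{p,X}\leq\mu$, so $\log\minchar{X}\leq\mu<\lambda$.

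The candidate local base at such a $p$ will be drawn from
\[
\mcF=\{U_\alpha\setminus\closure{T}:\alpha<\mu,\,T\subseteq A\},
\]
which has cardinality at most $2^\mu$; write $\mcF_p=\{F\in\mcF:p\in F\}$ for $p\in B$. Given an open $W\ni p$, the choice $T=A\setminus W$ forces $p\in U_\alpha\setminus\closure{T}$ automatically, since $p\notin\closure{A\setminus W}$ (else the open neighborhood $W$ would meet $A\setminus W$). The inclusion $U_\alpha\setminus\closure{T}\subseteq W$ is equivalent to $U_\alpha\subseteq W\cup\closure{A\setminus W}$, and by the decreasing neighborhood base property of $\{U_\alpha\}$, such $\alpha$ exists as soon as $B\subseteq\interior(W\cup\closure{A\setminus W})$. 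Thus the goal reduces to selecting $p\in B$ such that this inclusion holds for every $W\in\nbhd{p,X}$.

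The main obstacle is the selection of $p$. I plan to argue by contradiction: if no $p\in B$ has the required property, then for each $p\in B$ one fixes an open $W_p\ni p$ together with a witness $q_p\in B\setminus\interior(W_p\cup\closure{A\setminus W_p})$. Leveraging these witnesses in concert with $\hattight{X}\leq\lambda$, I plan to inductively construct a transfinite free sequence $(x_i:i<\lambda)$ in $X$, following the \shap-style bookkeeping implicit in Lemma~\ref{freepichar}: at each stage $i$, one selects a new candidate $p_i\in B$, uses $W_{p_i}$ to separate $\closure{\{x_j:j<i\}}$ from $\closure{\{x_j:j\geq i\}}$, and harvests an appropriate witness from $q_{p_i}$ to serve as $x_i$. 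The resulting free sequence of length $\lambda$ contradicts the hypothesis via Lemma~\ref{freetight}, so some $p$ with the required property must exist.

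The delicate part is ensuring the induction runs through all stages, especially through limit stages when $\lambda$ is a limit cardinal of countable cofinality; here one must diagonalize against the accumulated witnesses (and against the fact that the neighborhood base $\{U_\alpha\}$ is only $\mu$-long) to keep the candidate points $p_i\in B$ viable and to prevent premature collision of the closures $\closure{\{x_j:j<i\}}$ and $\closure{\{x_j:j\geq i\}}$.
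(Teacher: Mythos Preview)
Your setup with $A$, $B$, $\mu$, and the family $\mcF$ is sound, but the ``selection of $p$'' argument is both the unfinished part of your proposal and entirely unnecessary. You are trying to force $\mcF_p$ to be a genuine local base at some $p\in B$, and this is what creates the difficulty; the paper instead goes through pseudocharacter. Observe that for \emph{every} $p\in B$ one has $\bigcap\mcF_p=\{p\}$: the $U_\alpha$'s intersect to $B$, and for any $q\in B\setminus\{p\}$ one separates $p$ and $q$ by disjoint open sets $V_p,V_q$ and takes $T=A\cap V_q$, so that $q\in\closure{T}$ (since $q\in\closure{A}$ and $V_q$ is open) while $p\notin\closure{T}$. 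Hence $\psi(p,X)\leq\card{\mcF}\leq 2^\mu$, and compactness gives $\chi(p,X)=\psi(p,X)\leq 2^\mu$. No selection is needed.

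The paper's (suppressed) proof phrases this more cleanly: since $A$ is dense in $\closure{A}$, regular open subsets of $\closure{A}$ are determined by their trace on $A$, so $w(\closure{A})\leq 2^{\card{A}}$; hence $\psi(p,B)\leq w(B)\leq 2^{\card{A}}$ for any $p\in B$, and then $\chi(p,X)=\psi(p,X)\leq\psi(p,B)\cdot\psi(B,X)\leq 2^{\card{A}}\cdot\psi(B,X)$. Your $\mcF$ is essentially an explicit witness to this weight bound combined with the neighborhood base of $B$.

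As written, your contradiction sketch is a genuine gap: you never explain how the witnesses $q_p$ feed into a free sequence of length $\lambda$, and the appeal to ``\shap-style bookkeeping'' does not substitute for an argument. In particular, it is not clear how the data $(W_p,q_p)$ for $p\in B$ produces the required separating neighborhoods at every stage, nor why the construction should run to length $\lambda$ rather than $\mu$ or $\card{B}$. Drop the selection step and use $\chi=\psi$ instead.
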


\begin{corollary}\label{escapeminchar}
If $X$ is a compactum, $\lm$ is an infinite cardinal, and
$\lm$ is not in $\bigcup_{p\in X}\escape{p,X}$, then $\log(\minchar{X})<\lm$.
\end{corollary}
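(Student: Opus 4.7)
The plan is to reduce the corollary to Lemma~\ref{logminchar} by translating the hypothesis on $\escape{p,X}$ into the hypothesis ``no free sequence of length $\lm$.'' This is essentially a one-step deduction using machinery already established in the section.

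First I would invoke Corollary~\ref{freespectrum} in its contrapositive form: if $\lm\notin\bigcup_{p\in X}\escape{p,X}$, then $\lm\geq\hatfree{X}$. Next I would note the monotonicity of free sequences under restriction: a free sequence of length $\mu$ restricted to an initial segment of length $\nu\leq\mu$ remains free, so if $X$ had a free sequence of length $\lm\geq\hatfree{X}$, it would also have one of length $\hatfree{X}$, contradicting the definition of $\hatfree{X}$. Hence $X$ has no free sequence of length $\lm$.

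With that in hand, Lemma~\ref{logminchar} applies directly and delivers $\log(\minchar{X})<\lm$, which is the desired conclusion.

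There is no real obstacle here; every ingredient was set up beforehand. The only point to watch is the monotonicity remark in the middle step, which is immediate but worth stating explicitly since $\hatfree{X}$ is defined as the least cardinal admitting no free sequence of that exact length, not as a supremum.
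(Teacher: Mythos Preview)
Your proof is correct and follows essentially the same approach as the paper: reduce to ``no free sequence of length $\lm$'' and then invoke Lemma~\ref{logminchar}. The only difference is that the paper gets the intermediate conclusion in one stroke from the contrapositive of Lemma~\ref{freeescape} (an infinite cardinal is a limit ordinal), whereas you detour through Corollary~\ref{freespectrum} and a monotonicity remark; your route is fine but slightly longer than necessary.
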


\begin{lemma}\label{charbound}
If $X$ is a space, $p\in X$, and $\mu\in\escape{p,X}\cap\Reg$, then
$\character{p,X}\geq\mu$.
\end{lemma}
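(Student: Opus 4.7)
The plan is to prove the contrapositive: if $\character{p,X} < \mu$, then no $\mu$-long escape sequence can exist at $p$. So fix a local base $\mcB$ at $p$ with $\card{\mcB} < \mu$, and let $\vec{U} = (U_i : i < \mu)$ be any decreasing $\mu$-sequence of neighborhoods of $p$; I will show that $\bigcap_{i<\mu} U_i$ must be a neighborhood of $p$.

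The key step is a pigeonhole argument using that $\mu$ is regular. For each $i < \mu$, since $\mcB$ is a local base, I can pick $B_i \in \mcB$ with $B_i \subseteq U_i$. This defines a map $f \colon \mu \to \mcB$ with $\card{\mcB} < \mu = \add{\mu}$, so by Lemma~\ref{addpartition} (applied to the preorder $\mu$) the map $f$ is constant on a cofinal subset $J \subseteq \mu$; call its value $B$. Thus $B \subseteq U_i$ for all $i \in J$.

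Now I use that $\vec{U}$ is decreasing and $J$ is cofinal in $\mu$: for any $j < \mu$, pick $i \in J$ with $i \geq j$, and then $B \subseteq U_i \subseteq U_j$. Hence $B \subseteq \bigcap_{j<\mu} U_j$, so the intersection contains the neighborhood $B$ of $p$ and is itself a neighborhood of $p$. This shows $\vec{U}$ is not an escape sequence, so $\mu \notin \escape{p,X}$, contradicting the hypothesis.

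There is no real obstacle; the argument is essentially a one-paragraph pigeonhole/regularity argument, and the only ingredient beyond the definitions is Lemma~\ref{addpartition} (or equivalently, the direct observation that a function from a regular cardinal $\mu$ to a set of size less than $\mu$ must be constant on a cofinal subset). Note that regularity of $\mu$ is essential here, which is why the hypothesis restricts to $\mu \in \Reg$.
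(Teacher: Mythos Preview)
Your proof is correct and takes essentially the same approach as the paper's: both argue that a map from the regular cardinal $\mu$ into a set (or preorder) of size (or cofinality) less than $\mu$ must be bounded on a cofinal set, so no $\mu$-long escape sequence can exist. The paper phrases this abstractly for preorders in one sentence, while you spell it out concretely with a local base and invoke Lemma~\ref{addpartition}; the underlying pigeonhole/regularity idea is identical.
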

\begin{proof}
If $P$ is a preorder and $\mu$ is a regular cardinal greater than $\cf(P)$,
then every map from $\mu$ to $P$ is bounded on a cofinal subset of
$\mu$, so there is no unbounded increasing $\mu$\nbd-sequence in $P$.
\end{proof}

\begin{theorem}[GCH]\label{gchhomog}
If $X$ is a homogeneous compactum and $p\in X$, then 
$\escape{p,X}$ is a closed initial segment 
of the infinite cardinals with maximum $\character{p,X}$.
\end{theorem}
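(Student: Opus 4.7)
The plan is to prove that $E:=\escape{p,X}$ equals $\{\mu\text{ infinite cardinal}:\mu\le\ka\}$, where $\ka:=\character{p,X}$. Because $X$ is homogeneous, neither $\escape{p,X}$ nor $\character{p,X}$ depends on the choice of $p$; in particular $\ka=\minchar{X}$. I may assume $X$ is infinite, so $p$ is not isolated (otherwise $X$ is a finite discrete space and both sides are empty).

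For the inclusion $[\om,\ka]\cap\mathrm{Card}\subseteq E$, I first show $\ka\in E$ by well-ordering a local base $\{B_i:i<\ka\}$ at $p$ and forming the weakly decreasing $\ka$-sequence $U_i=\bigcap_{j\le i}B_j$ of neighborhoods of $p$; its total intersection equals $\{p\}$ by Hausdorffness and hence is not a neighborhood of $p$. I then show that every infinite $\lm<\ka$ belongs to $E$: if not, then by homogeneity $\lm\notin\bigcup_{q\in X}\escape{q,X}$, so Corollary~\ref{escapeminchar} gives $\log(\minchar{X})<\lm$; under GCH, $2^{<\lm}=\lm$ for every infinite cardinal, whence $\minchar{X}\le\lm$, contradicting $\minchar{X}=\ka>\lm$.

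For the reverse inclusion $E\subseteq[\om,\ka]$, Lemma~\ref{charbound} immediately handles the regular case by forcing $\mu\le\ka$ for every regular $\mu\in E$. The main obstacle will be ruling out singular cardinals $\mu>\ka$ from $E$: a $\mu$-long weakly decreasing escape at $p$ only yields the chain-pigeonhole bound $\cf(\mu)\le\ka$, not $\mu\le\ka$. My plan for closing this gap is to show that $E$ is downward closed; once that is established, a singular $\mu>\ka$ in $E$ would force $\ka^+\in E$, and since $\ka^+$ is regular and strictly exceeds $\ka$, Lemma~\ref{charbound} delivers the contradiction. Establishing downward closure is the crux of the argument; the route I would take is to combine Lemma~\ref{escapefree} with Lemma~\ref{charbound} to pin $\hatfree{X}=\ka^+$ in the regular-character case, use Corollary~\ref{freespectrum} to recover $[\om,\ka^+)\cap\mathrm{Card}\subseteq E$ from that free-sequence bound, and then invoke Lemma~\ref{freecard} under GCH to bound $|X|\le\ka^+$ so that the global combinatorial rigidity of $\nbhd{p,X}$ forbids weakly decreasing escape chains of length greater than $\ka$.
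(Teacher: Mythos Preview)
Your proposal has a genuine gap at the very first step. The construction $U_i=\bigcap_{j\le i}B_j$ does not produce a sequence of neighborhoods of $p$: once $i$ is infinite, this is an infinite intersection and need not be a neighborhood. For instance, in $2^{\om_1}$ with $p=\vec 0$, if the first $\om$ of the $B_j$ are the subbasic sets $\{x:x(j)=0\}$, then $U_\om$ is already nowhere dense. So you have not shown $\ka:=\character{p,X}\in\escape{p,X}$. This is not merely cosmetic: in a compactum one cannot in general manufacture a $\character{p,X}$-long escape sequence directly (at $\infty$ in the one-point compactification of a discrete space of size $\om_1$, the neighborhood filter is cofinally $[\om_1]^{<\om}$, which admits no unbounded $\om_1$-chain), so any argument for $\ka\in\escape{p,X}$ must exploit homogeneity in a substantive way. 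Your Corollary~\ref{escapeminchar} argument does use homogeneity and actually salvages the case where $\ka$ is a limit cardinal, but when $\ka=\nu^+$ it only yields $\log\ka=\nu<\ka$, hence $\ka\le 2^{<\ka}=\ka$ under GCH, which is no contradiction.

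The paper closes this gap by never attempting a direct construction. It instead lets $\lm$ be the first infinite cardinal missing from $\escape{p,X}$ and squeezes $\character{p,X}$ from above: Lemmas~\ref{freeescape} and~\ref{freecard} give $\card{X}\le 2^{<\nu}$ for $\nu$ the least regular cardinal $\ge\lm$, while the \v Cech--Pospi\v sil theorem together with homogeneity gives $\card{X}\ge 2^{\character{p,X}}$; under GCH this forces $\character{p,X}<\nu$ and hence $\character{p,X}\le\lm$. The \v Cech--Pospi\v sil lower bound is precisely the ingredient your sketch lacks. Your closing paragraph (``global combinatorial rigidity of $\nbhd{p,X}$'') gestures toward a cardinality argument but never supplies the inequality $2^{\character{p,X}}\le\card{X}$ needed to pin $\character{p,X}$ down, and the phrase itself does not correspond to any concrete step.
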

\begin{proof}
We may assume that $\escape{p,X}$ is nonempty.
Let $\ka$ be the supremum of $\escape{p,X}$.
Let $\lm$ be the least infinite cardinal not in $\escape{p,X}$.
By Lemma~\ref{charbound}, $\character{p,X}$ is an upper bound of
$\escape{p,X}\cap\Reg$. Actually, $\character{p,X}$ bounds
$\escape{p,X}\setminus\Reg$ too: by \arhan's Theorem,
$\card{X}\leq 2^{\character{X}}$, which implies 
$\card{X}\leq 2^{\character{p,X}}$ by homogeneity; hence,
$\card{\nbhd{p,X}}\leq\character{p,X}^{++}$ by GCH.
Thus, $\ka\leq\character{p,X}$.
Therefore, it suffices to show that $\character{p,X}<\lm=\ka^+$.
By Lemmas~\ref{freeescape} and \ref{freecard}, 
$\card{X}\leq 2^{<\nu}$ where $\nu$ is the least regular cardinal $\geq\lm$.
By the \v Cech\nbd-Pospi\v sil Theorem, $\card{X}\geq 2^{\minchar{X}}$, 
which implies $\card{X}\geq 2^{\character{p,X}}$ by homogeneity. 
Therefore, by GCH, $\character{p,X}<\nu$, so $\character{p,X}\leq\lm$.
Since $\ka\leq\character{p,X}$, it follows that $\lm$ is 
the least infinite cardinal strictly above every $\mu\in\escape{p,X}$.
All that remains is to show that supremum of $\escape{p,X}$ is attained,
for if it is, then $\nu=\lm=\ka^+$. 
Seeking a contradiction, suppose that the supremum of $\escape{p,X}$ 
is not attained. We then have that $\ka$ is a limit cardinal and
$\ka=\lm$; by GCH, $\log(\ka)=\ka$.
By Corollary~\ref{escapeminchar}, $\log(\minchar{X})<\lm$;
by homogeneity, $\log(\character{p,X})<\lm$.
Therefore, $\log(\ka)\leq\log(\character{p,X})<\lm=\ka=\log(\ka)$,
which is absurd.
\end{proof}

De La Vega proved that GCH implies $\tight{X}=\character{X}$ for
all homogeneous compacta~\cite{delavega}. Letting $\free{X}$ denote 
the supremum of the cardinalities of free sequences in $X$, 
we have $\free{X}=\tight{X}$ for all compacta, by Lemma~\ref{freetight}.
Theorem~\ref{gchhomog} allows us to
deduce that the supremum $\free{X}$
is attained if GCH holds and $X$ is a homogeneous compactum.

\begin{corollary}[GCH]
If $X$ is a homogeneous compactum, then $\hatfree{X}=\character{X}^+$.
\end{corollary}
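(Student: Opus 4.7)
The plan is to sandwich $\hatfree{X}$ between $\character{X}$ and $\character{X}^+$, using the three facts just assembled: De La Vega's identity $\tight{X}=\character{X}$ (under GCH and homogeneity), Lemma~\ref{freetight} (the \arhan--\shap\ equivalence, for regular $\ka$, between ``$\hattight{X}\le\ka$'' and ``$X$ has no free sequence of length $\ka$''), and Theorem~\ref{gchhomog} (the structure of $\escape{p,X}$).

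For the upper bound $\hatfree{X}\le\character{X}^+$, unwinding definitions shows that $\tight{X}\le\character{X}$ is equivalent to $\hattight{X}\le\character{X}^+$ (since $[A]^{\le\character{X}}=[A]^{<\character{X}^+}$), so De La Vega's equality yields $\hattight{X}\le\character{X}^+$. Applying Lemma~\ref{freetight} at the regular cardinal $\character{X}^+$ then gives that $X$ has no free sequence of length $\character{X}^+$, so $\hatfree{X}\le\character{X}^+$.

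For the lower bound $\hatfree{X}\ge\character{X}^+$, the task is to produce a free sequence of length $\character{X}$. Theorem~\ref{gchhomog} together with homogeneity (so $\character{p,X}=\character{X}$ for every $p$) makes $\escape{p,X}$ a closed initial segment of the infinite cardinals with maximum $\character{X}$; hence every regular infinite $\mu\le\character{X}$ lies in $\escape{p,X}\cap\Reg$, and Lemma~\ref{escapefree} converts each such $\mu$ into a free sequence of length $\mu$ in $X$. When $\character{X}$ is regular, taking $\mu=\character{X}$ closes the argument and the two bounds combine to give $\hatfree{X}=\character{X}^+$.

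The main obstacle is the singular case of $\character{X}$: Lemma~\ref{freetight} is stated only for regular $\ka$, and the thinning step in Lemma~\ref{escapefree} likewise requires regularity of the target length, so the strategy above yields only $\hatfree{X}\ge\character{X}$ in the limit. To close this gap I would invoke the full classical \arhan--\shap\ theorem---that in any compactum the supremum of cardinalities of free sequences is itself attained---which, combined with $\tight{X}=\character{X}$, yields a free sequence of length $\character{X}$ even when $\character{X}$ is singular, producing $\hatfree{X}>\character{X}$ and thus $\hatfree{X}=\character{X}^+$.
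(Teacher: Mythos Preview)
Your upper bound is correct, and your route to it (De~La~Vega's $\tight{X}=\character{X}$ under GCH, followed by Lemma~\ref{freetight} at the regular cardinal $\character{X}^+$) is a legitimate alternative to the paper's argument. The paper instead stays inside the escape\nbd-sequence machinery: by Lemma~\ref{freeescape} a free sequence of length $\lambda$ yields a $\lambda$\nbd-long escape sequence at some point, hence at every point by homogeneity, so $\lambda\in\escape{p,X}$; Theorem~\ref{gchhomog} then forces $\lambda\le\character{p,X}$, giving $\hatfree{X}\le\character{p,X}^+$. Your lower bound in the regular case likewise matches the paper's: $\character{p,X}\in\escape{p,X}$ by Theorem~\ref{gchhomog}, then apply Lemma~\ref{escapefree}.

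The genuine error is your handling of the singular case. The ``full classical \arhan--\shap\ theorem'' you appeal to---that in every compactum the supremum $\free{X}$ of lengths of free sequences is attained---is false. The paragraph immediately following this corollary exhibits an explicit (inhomogeneous) compactum with $\hatfree{X}=\free{X}=\aleph_{\omega_1}$, so the supremum is \emph{not} attained there. Worse, attainment of $\free{X}$ is precisely the content of the corollary you are trying to prove (it is equivalent to $\hatfree{X}=\free{X}^+$ once you know $\free{X}=\character{X}$), so your proposed fix is circular. The paper does not take this detour; it simply invokes Lemma~\ref{escapefree} with $\kappa=\character{p,X}$ directly.
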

\begin{proof}
Fix $p\in X$. By Lemma~\ref{freeescape} and homogeneity,
$\hatfree{X}\leq\sup(\escape{p,X})^+$.
By Theorem~\ref{gchhomog}, $\max(\escape{p,X})=\character{p,X}$,
so $\hatfree{X}\leq\character{p,X}^+$. Moreover,
$\character{p,X}<\hatfree{X}$ by Lemma~\ref{escapefree}.
\end{proof}

It is easy to find inhomogeneous compacta $X$ where the supremum 
$\free{X}$ is not attained, $\free{X}<\character{X}$, or both.
For example, if $X$ is the one\nbd-point compactification 
of the topological sum $\oplus_{i<\al_{\al_{\om_1}}}Y_i$
where $Y_i$ is the ordered space $\al_j$ where
$\al_{\al_j}\leq i<\al_{\al_{j+1}}$, then
$\hatfree{X}=\free{X}=\al_{\om_1}$ and $\character{X}=\al_{\al_{\om_1}}$.

\section{Rectangles in $\betaoo$}\label{ultrafilters}

\begin{definition}\
\begin{itemize}
\item A preorder is \emph{cofinally rectangular} if it is
cofinally equivalent to a finite product of linear orders.
\item A preorder is \emph{cofinally scalene} if it
cofinally equivalent to some $\prod S$ where
$S\subseteq\Reg$.
\end{itemize}
\end{definition}

We now turn to the class of neighborhood filters
in the Stone-\v Cech remainder $\betaoo$, focusing on
the properties of being cofinally scalene, a natural
weakening of cofinally rectangular.
Since $\betaoo$ is not homogeneous~\cite{frolik}, 
we are leaving behind Van Douwen's Problem,
our initial motivation.
However, $\betaoo$ has been extensively studied,
so if we are to examine cofinally scalene 
neighborhood filters for their own sake, then 
$\betaoo$ is a reasonable place to start. 
(We will show that $\beta\om$ has no cofinally scalene
nonprincipal neighborhood filters.)

It is essentially shown in~\cite{isbell} that
$\nbhd{p,\betaoo}\eqcf([\cardc]^{<\om},\subseteq)$ for some 
$p\in\betaoo$. (Extend any independent family of size $\cardc$
to an ultrafilter avoiding pseudointersections of infinite 
subsets of the independent family.) The next (easy) theorem 
implies that $[\cardc]^{<\om}$ is not cofinally scalene.

\begin{theorem}\label{tukeyhigh}
If $\ka$ and $\lm$ are infinite cardinals, $\ka<\lm$,
and $\ka$ is regular, then
$([\lm]^{<\ka},\subseteq)$ is not cofinally scalene.
\end{theorem}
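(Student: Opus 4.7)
The plan is to exploit the invariant of cofinal equivalence ``$P$ has a $\lm$\nbd-sized subset in which every $\ka$\nbd-sized subfamily is unbounded.''  This is witnessed in $[\lm]^{<\ka}$ by the $\lm$ singletons $\{\{\alpha\}:\alpha<\lm\}$, since by regularity of $\ka$ the union of any $\ka$ of them has size $\ka$, is not in $[\lm]^{<\ka}$, and is therefore unbounded.  I would show that no $\prod S$ with $S\subseteq\Reg$ has such a subset.  As a preliminary, $\add$ is an $\eqcf$\nbd-invariant, and one readily computes $\add{[\lm]^{<\ka}}=\ka$ (using regularity of $\ka$) and $\add{\prod S}=\min S$ for nonempty $S\subseteq\Reg$ (bound coordinate\nbd-wise versus use the minimum coordinate for an unbounded sequence).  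Hence $[\lm]^{<\ka}\eqcf\prod S$ forces $\min S=\ka$; in particular $\ka\in S$ and every other $\mu\in S$ exceeds $\ka$.

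Next I would transfer the witnessing subset via the common enlargement $R$ given by the definition of $\eqcf$: for each $\alpha<\lm$ pick $x_\alpha\in\prod S$ with $\{\alpha\}\leq x_\alpha$ in $R$.  A short argument using cofinality of $[\lm]^{<\ka}$ in $R$ shows that no $\ka$ distinct indices $\alpha$ can have $\{x_\alpha\}$ bounded in $\prod S$: any common upper bound $y\in\prod S$ would bound the corresponding singletons in $R$ and hence, by cofinality, in $[\lm]^{<\ka}$ by some $a$ with $\card{a}<\ka$, forcing $\ka$ distinct $\alpha$'s into $a$.  Thus $A=\{x_\alpha:\alpha<\lm\}\subseteq\prod S$ has no $\ka$\nbd-sized subfamily bounded in $\prod S$, and the same argument (applied to $\ka$ indices with equal $x_\alpha$) shows each value of $\alpha\mapsto x_\alpha$ is attained fewer than $\ka$ times, giving $\card{A}\geq\lm$.

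Finally I would contradict the existence of $A$ inside $\prod S$.  Project onto the coordinate $\ka\in S$ via $\pi_\ka\colon A\to\ka$.  If every fibre had size less than $\ka$, then $\card{A}\leq\ka\cdot\ka=\ka<\lm$; so some fibre $F$ has size at least $\ka$.  Pick $B\subseteq F$ with $\card{B}=\ka$.  Then $B$ is constant, hence bounded, in coordinate $\ka$; in any other $\mu\in S$ we have $\mu>\ka$ regular, so $\pi_\mu(B)$ has size $\leq\ka<\mu$ and is bounded in $\mu$.  Thus $B$ is bounded coordinatewise and so in $\prod S$, contradicting the property of $A$.  The one non\nbd-routine step is the transfer of the witnessing property in the middle paragraph, which is standard Tukey\nbd-theoretic bookkeeping; the rest is pigeonhole and basic cardinal arithmetic.
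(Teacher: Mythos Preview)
Your proof is correct and follows essentially the same approach as the paper's: compute $\add{[\lm]^{<\ka}}=\ka$ to force $\min S=\ka$, lift singletons into $\prod S$ via the common preorder, pigeonhole on the $\ka$\nbd-coordinate to find $\ka$ images sharing a value there, then bound them using $\add{\prod(S\setminus\{\ka\})}>\ka$ and pull the bound back to $[\lm]^{<\ka}$ for a contradiction. The only cosmetic difference is that the paper lifts just $\ka^+$ singletons (which already suffices for the pigeonhole) rather than all $\lm$, thereby avoiding your extra step of bounding the multiplicity of $\alpha\mapsto x_\alpha$ to get $\card{A}\geq\lm$.
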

\begin{proof}
Seeking a contradiction, suppose $S\subseteq\Reg$ and
$[\lm]^{<\ka}\cup\prod S$ has a preordering $\trileq$ that
makes $[\lm]^{<\ka}$ and $\prod S$ cofinal suborders.
For each $i<\ka^+$, choose $f(i)\in\prod S$ such
that $\{i\}\trileq f(i)$. 
Since $\add{\prod S}=\add{[\lm]^{<\ka}}=\ka$, $\ka=\min(S)$.
Hence, there exist $\alpha<\ka$ and a cofinal
subset $I$ of $\ka^+$ such that $f(i)(\ka)=\alpha$ for
all $i\in I$. Since $\add{\prod(S\setminus\{\ka\})}>\ka$,
$f[J]$ is bounded for some $J\in[I]^{\ka}$. Hence,
$[J]^1$ is bounded in $[\lm]^{<\ka}$; this is our desired
contradiction.
\end{proof}

Isbell's Problem asks whether it is consistent with ZFC that all 
neighborhood filters of $\beta\om$ are cofinally equivalent to 
$[\cardc]^{<\om}$ or $1$.
In~\cite{milovichtuk}, it was shown that this is equivalent to asking
whether it is consistent with ZFC that all neighborhood filters of
$\betaoo$ are cofinally equivalent to $[\cardc]^{<\om}$.
Our next theorem solves an easier version of Isbell's Problem:
consistently, there are no cofinally scalene neighborhood
filters in $\betaoo$.

\begin{definition}\
\begin{itemize}
\item A point $p$ in a space $X$ is a \emph{P\nbd-point} if
$\nbhd{p,X}$ is $\om_1$\nbd-directed.
\item A filter $\mcF$ on $\om$ is assumed to be ordered by $\supseteq$, but
$\mcF_*$ denotes $\mcF$ ordered by eventual containment $\supseteq^*$.
\item $\beta\om$ is identified with the space of ultrafilters on $\om$.
\end{itemize}
\end{definition}
Note that if $\mcU\in\betaoo$, then $\mcU\eqcf\nbhd{\mcU,\beta\om}$
and $\mcU_*\eqcf\nbhd{\mcU,\betaoo}$.

\begin{theorem}
It is consistent with ZFC that no neighborhood filter
in $\betaoo$ is cofinally scalene.
\end{theorem}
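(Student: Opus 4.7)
I plan to work in a model of $\mathrm{ZFC}+\cardp=\cardc$, which is consistent (e.g., it holds in any model of $\mathrm{MA}+\neg\mathrm{CH}$); by Bell's theorem $\cardp=\cardmcen$, so in such a model $\cardml{n}=\cardc$ for every $n<\om$. The preceding theorem then gives $\mcU_*\not\eqcf\ka\times\cardc$ for every $\mcU\in\betaoo$ and every $\ka<\cardc$, and the strategy is to reduce every scalene possibility $\mcU_*\eqcf\prod S$ to this forbidden two-factor form.

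Suppose toward contradiction that $\mcU\in\betaoo$ and $\mcU_*\eqcf\prod S$ for some nonempty $S\subseteq\Reg$; after collapsing repeated factors I take $S$ to be a strictly increasing sequence of regulars. First I show $\cf(\mcU_*)=\cardc$: if $\mcU$ admitted a $\supseteq^*$-cofinal subset $\mcB\subseteq\mcU$ of size less than $\cardp$, then $\mcB$ would have an infinite pseudointersection $P$; in an ultrafilter either $P\in\mcU$, in which case cofinality of $\mcB$ together with $A\supseteq^* P$ for all $A\in\mcB$ would make $P$ a maximum of $\mcU_*$ (contradicting that any $Q\in\mcU$ splits into two infinite halves with exactly one strictly smaller half in $\mcU$), or $\om\setminus P\in\mcU$, in which case for some $A\in\mcB$ with $A\subseteq^*\om\setminus P$ we would have $P\subseteq^* A\subseteq^*\om\setminus P$, forcing $P$ to be finite. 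Combined with $\cf(\mcU_*)\leq\cardc$, this yields $\max S=\cardc$; taking $S$ finite, set $D=\prod(S\setminus\{\cardc\})$, so $\mcU_*\eqcf D\times\cardc$ with $\cf(D)<\cardc$. The final step is to reduce this equivalence to the forbidden form $\mcU_*\eqcf\cf(D)\times\cardc$, whereupon the preceding theorem yields a contradiction. The natural candidate Tukey reduction is $(d,\beta)\mapsto(\alpha(d),\beta)$, where $\alpha(d)=\min\{\alpha<\cf(D):d\leq d_\alpha\}$ along a fixed cofinal enumeration $(d_\alpha)$ of $D$; a direct check shows this works whenever $\add(D)=\cf(D)$, which handles the case $|S|=2$.

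The main obstacle is $|S|\geq 3$: then $\add(D)<\cf(D)$, and a short unbounded sequence in $D$ of length $\add(D)$ can have bounded image in $\cf(D)$ under any single-valued map, breaking the naive reduction. To overcome this I would attempt one of two routes: first, revisit the proof of the preceding $\ka\times\cardc$-exclusion to check whether its argument in fact yields the stronger conclusion that $\mcU_*\not\eqcf D\times\cardc$ whenever $\cf(D)<\cardml{n}$ for some $n<\om$, which would make the current theorem immediate under $\cardp=\cardc$; or, failing that, argue by induction on $|S|$, peeling off the second-largest factor of $S$ at each stage via a refined cofinal-type analysis that exploits the full scalene structure of $D$ rather than collapsing $D$ to a single cardinal.
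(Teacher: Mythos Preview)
Your choice of model is fatal. Under $\cardp=\cardc$ (equivalently, MA($\sigma$\nbd-centered), by Bell's theorem), there \emph{are} cofinally scalene neighborhood filters in $\betaoo$: it is a standard consequence of MA($\sigma$\nbd-centered) that $\cardc$ is regular and that some $\mcU\in\betaoo$ satisfies $\mcU_*\eqcf\cardc$, i.e., $\mcU_*\eqcf\prod\{\cardc\}$ with $|S|=1$. The paper states this explicitly just a few lines after the theorem. Your argument never touches this case: when $S=\{\cardc\}$ you get $D=\prod\varnothing=1$ with $\cf(D)=1$, and the $\ka\times\cardc$ exclusion lemma you invoke requires $\ka$ to be a regular \emph{infinite} cardinal, so it says nothing here. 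Even on its own terms, your outline is also openly incomplete for $|S|\geq 3$, but the $|S|=1$ gap cannot be closed in your model.

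The paper's proof takes the opposite tack. Rather than making $\cardml{n}$ large, it works in Shelah's model with no P\nbd-points in $\betaoo$. The argument then shows that any cofinally scalene $\mcU_*\eqcf\prod S$ forces $\mcU$ to be a P\nbd-point: $S\neq\varnothing$ since $\betaoo$ has no isolated points, and $\om\notin S$ because (by the cited result from \cite{milovichtuk}) $\mcU_*\not\eqcf\om\times D$ whenever $D$ is $\om_1$\nbd-directed and $\prod(S\setminus\{\om\})$ is always $\om_1$\nbd-directed. Hence $\min(S)\geq\om_1$, so $\prod S$ and therefore $\mcU_*$ is $\om_1$\nbd-directed, contradicting the absence of P\nbd-points. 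The whole proof is four lines; no case split on $|S|$ is needed.
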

\begin{proof}
Suppose that $S\subseteq\Reg$ and $\nbhd{p,\betaoo}\eqcf \prod S$.
There is a model of ZFC without P\nbd-points in 
$\betaoo$~\cite{shelahnoppoint}, so it suffices to show that
$\nbhd{p,\betaoo}$ is $\oml$\nbd-directed.
First, $\betaoo$ has no isolated points, so $\nbhd{p,\betaoo}\not\eqcf 1$.
Hence, $S\not=\varnothing$ and $\min(S)\geq\om$.
Second, by~\cite[Thm. 3.13]{milovichtuk}, 
we cannot have $\nbhd{p,\betaoo}\eqcf\om\times D$ where
$D$ is $\om_1$\nbd-directed. Since $\prod(S\setminus\{\om\})$
is $\oml$\nbd-directed (even if $S\setminus\{\om\}=\varnothing$), 
we cannot have $\om\in S$. Therefore, $\min(S)\geq\oml$, so 
$\prod S$ is $\oml$\nbd-directed. Therefore, 
$\nbhd{p,\betaoo}$ is also $\oml$\nbd-directed.
\end{proof}

\begin{remark}
For an alternative proof of the above theorem, 
force with finite binary partial functions on $\ka$
where $\om_1<\ka=\ka^\om$.
This yields a model of ZFC with P\nbd-points $\mcV$ in $\betaoo$
because $\cardd=\cardc$ (see~\cite[Thm. 9.25]{blasshandbook}), 
but they all satisfy $\mcV_*\eqcf[\cardc]^{<\om_1}$ because the
generic sequence of Cohen reals has length $\cardc$ and none of its
uncountable subsequences has an infinite pseudointersection.
In this model, $\cardc=\ka$, so $[\cardc]^{<\om_1}$ is not
cofinally scalene by Theorem~\ref{tukeyhigh}.
\end{remark}

On the other hand, it is well known that MA($\sigma$\nbd-centered)
implies that $\cardc$ is regular and $\nbhd{p,\betaoo}\eqcf\cardc$
for some $p\in\betaoo$. (See~\cite[Thms. 7.12, 7.14]{blasshandbook}.) 
In~\cite{milovichtuk}, it was shown that MA($\sigma$\nbd-centered) 
also implies that for every regular infinite cardinal $\ka\leq\cardc$, 
$\nbhd{p,\betaoo}\eqcf[\cardc]^{<\ka}$ for some $p\in\betaoo$.
Are these all the cofinal types of neighborhood filters in $\betaoo$
implied to exist by MA($\sigma$\nbd-centered)? Does
MA($\sigma$\nbd-centered) imply that every cofinally scalene
neighborhood filter in $\betaoo$ is cofinally equivalent to $\cardc$?
The rest of this section develops some partial answers to these questions.

\begin{definition}\label{deftukey}\
\begin{itemize}
\item A map between directed sets is \emph{Tukey}
if it sends unbounded sets to unbounded sets.
\item A map between directed sets is \emph{convergent}
if it sends cofinal sets to cofinal sets.
\item $D\leq_T E$ means there is a Tukey map $f\colon D\rightarrow E$.
\item $E\geq_T D$ means there a convergent map $g\colon E\rightarrow D$.
\end{itemize}
\end{definition}
It is easy to check that $D\leq_T E$ if and only if $E\geq_T D$. 
Tukey introduced the relation $\geq_T$ in~\cite{tukey} and there
proved that $D\geq_T E\geq_T D$ if and only if $D\eqcf E$.
(Tukey originally, equivalently defined $E\geq_T D$ to mean that
there exist maps $f\colon D\rightarrow E$ 
and $g\colon E\rightarrow D$ such that 
$e\geq f(d)\Rightarrow g(e)\geq d$.)
The following lemma is implicit in~\cite{tukey}.

\begin{lemma}\label{tukeyprod}
Given directed sets $A$, $B$, and $C$,
we have $A\times B\leq_T C$ if and only if 
$A\leq_T C$ and $B\leq_T C$.
In particular, if $A\leq_T B$, 
then $B\eqcf A\times B$.
\end{lemma}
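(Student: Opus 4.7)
The plan is to establish the biconditional by handling each direction separately and then deduce the ``in particular'' clause. The only subtlety is choosing, for each direction, between the Tukey-map definition and Tukey's equivalent formulation as a pair of maps $f\colon D\to E$, $g\colon E\to D$ with $e\geq f(d)\Rightarrow g(e)\geq d$, which is recalled in the excerpt immediately preceding the lemma.

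For the forward direction, suppose $h\colon A\times B\to C$ is Tukey. Fix $b_0\in B$ and set $f(a)=h(a,b_0)$. If $X\subseteq A$ is unbounded, then $X\times\{b_0\}$ is unbounded in $A\times B$, since any upper bound $(a^{*},b^{*})$ would force $a^{*}\in A$ to bound $X$. Hence $f[X]=h[X\times\{b_0\}]$ is unbounded in $C$, witnessing $A\leq_T C$. Symmetrically $B\leq_T C$.

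For the backward direction, use the pair formulation. Pick $f_A\colon A\to C$ and $g_A\colon C\to A$ witnessing $A\leq_T C$, and similarly $f_B,g_B$ for $B\leq_T C$. For each $(a,b)\in A\times B$, invoke the directedness of $C$ to choose $F(a,b)\in C$ above both $f_A(a)$ and $f_B(b)$, and define $G\colon C\to A\times B$ by $G(c)=(g_A(c),g_B(c))$. If $c\geq F(a,b)$, then $c$ dominates both $f_A(a)$ and $f_B(b)$, so $g_A(c)\geq a$ and $g_B(c)\geq b$, i.e., $G(c)\geq (a,b)$. Thus $(F,G)$ witnesses $A\times B\leq_T C$.

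For the ``in particular'' clause, apply the biconditional with $C=B$: combining $A\leq_T B$ (the hypothesis) with $B\leq_T B$ (via $\id_B$) yields $A\times B\leq_T B$. In the other direction, $b\mapsto (a_0,b)$ for any fixed $a_0\in A$ is a Tukey map $B\to A\times B$, by the same slice argument as in the forward direction. Tukey's theorem $D\geq_T E\geq_T D\Leftrightarrow D\eqcf E$, cited in the excerpt, then gives $B\eqcf A\times B$. No step is a genuine obstacle; the only care required is to remember that the combining step in the backward direction leans on the directedness of the codomain $C$ rather than on any monotonicity of the chosen maps.
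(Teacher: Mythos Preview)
Your proof is correct and follows essentially the same route as the paper's: the forward direction via slice maps $a\mapsto h(a,b_0)$, the backward direction via choosing an upper bound of the two images in the directed codomain $C$, and the ``in particular'' clause via $b\mapsto(a_0,b)$ together with Tukey's theorem. The only cosmetic difference is that for the backward direction the paper stays with the unbounded-sets definition (arguing directly that $r(a,b)\geq p(a),q(b)$ is Tukey), whereas you switch to the pair formulation $(F,G)$; the underlying construction is identical.
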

\begin{proof}
First, if $h\colon A\times B\rightarrow C$ is
Tukey, then, for any fixed $(a_0,b_0)\in A\times B$, 
the maps $f(\bullet,b_0)\colon A\rightarrow C$ 
and $g(a_0,\bullet)\colon B\rightarrow C$ are also Tukey.
Second, if $p\colon A\rightarrow C$ and $q\colon B\rightarrow C$
are Tukey, then any $r\colon A\times B\rightarrow C$ satisfying
$p(a),q(b)\leq r(a,b)$ is also Tukey.
Third, $B\leq_T A\times B$ is witnessed by $b\mapsto(a_0,b)$
for any fixed $a_0\in A$.
Hence, if $A\leq_T B$, then $A\times B\leq_T B\leq_T A\times B$,
which implies $B\eqcf A\times B$
\end{proof}

\begin{definition}\
\begin{itemize}
\item Fix a pairing function $\la\bullet,\bullet\ra$ 
from $\om\times\om$ to $\om$.
\item For all $E\subseteq\om$ and $i\in\om$, $(E)_i=\{j:\la i,j\ra\in E\}$.
\item Given filters $\mcF,\mcG$ on $\om$, the \emph{Fubini product}
$\mcF\otimes\mcG$ is 
$$
\{E\subseteq\om:\{i:(E)_i\in\mcG\}\in\mcF\}.
$$
\end{itemize}
\end{definition}

\begin{lemma}[CH]\label{betaooreduction}
Every nonprincipal neighborhood filter in $\beta\om$ is 
cofinally equivalent to a neighborhood filter in $\betaoo$.
\end{lemma}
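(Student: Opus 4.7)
My plan is to construct, given a nonprincipal ultrafilter $\mcU$ on $\om$, an ultrafilter $\mcV$ on $\om$ witnessing $(\mcU,\supseteq)\eqcf(\mcV,\supseteq^*)$. Under CH, enumerate $\mcU=\{U_\alpha:\alpha<\om_1\}$ and $\mcP(\om)=\{P_\alpha:\alpha<\om_1\}$, then perform a transfinite recursion of length $\om_1$ to simultaneously build a sequence $(V_\alpha)_{\alpha<\om_1}$ of subsets of $\om$ and a $\subseteq$-chain $(\mcV_\alpha)_{\alpha<\om_1}$ of proper filters whose union $\mcV$ is the desired ultrafilter.

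The invariant I maintain is that $U_\alpha\mapsto V_\alpha$ is an order isomorphism from $(\mcU,\supseteq)$ to $(\{V_\alpha:\alpha<\om_1\},\supseteq^*)$, i.e.\ $V_\alpha\supseteq^* V_\beta$ iff $U_\alpha\supseteq U_\beta$. At each successor stage $\alpha+1$, I make two choices: first, I decide whether $P_\alpha$ or $P_\alpha^c$ enters $\mcV_{\alpha+1}$, guided by compatibility with the order-theoretic data already determined by $(V_\beta)_{\beta<\alpha}$ and $U_\alpha$; second, I select $V_\alpha$ to be $\supseteq^*$-below the chosen side of $P_\alpha$, to satisfy $V_\alpha\supseteq^* V_\beta\iff U_\alpha\supseteq U_\beta$ for $\beta<\alpha$, and to be compatible with $\mcV_\alpha$. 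At limit stages I take $\mcV_\alpha=\bigcup_{\beta<\alpha}\mcV_\beta$. The sequence $\{V_\alpha\}$ ends up $\supseteq^*$-cofinal in $\mcV$, because every element of $\mcV$ contains a finite meet $V_{\beta_1}\cap\cdots\cap V_{\beta_k}$ of earlier members of the base, and by the invariant applied to the corresponding meet $U_{\beta_1}\cap\cdots\cap U_{\beta_k}\in\mcU$, there is some later $V_\gamma$ lying $\supseteq^*$-below this finite intersection. The cofinal equivalence then follows from the order isomorphism $U_\alpha\mapsto V_\alpha$ together with the cofinality of its image.

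The main obstacle is the existence of a suitable $V_\alpha$ at each successor stage. Writing $B_+=\{\beta<\alpha:U_\beta\subseteq U_\alpha\}$ and $B_-=\{\beta<\alpha:U_\beta\not\subseteq U_\alpha\}$, the set $V_\alpha$ must satisfy countably many constraints: $V_\alpha\supseteq^* V_\beta$ for $\beta\in B_+$; $V_\beta\setminus V_\alpha$ infinite for $\beta\in B_-$; $V_\alpha\subseteq^*$ the chosen side of $P_\alpha$; and nonempty intersection with each finite meet of elements of $\mcV_\alpha$. Under CH (which gives $\cardp=\om_1$) these countable combinatorial requirements can be met by a diagonal construction: one first selects a pseudo-union of $\{V_\beta:\beta\in B_+\}$, chooses the side of $P_\alpha$ that is $\supseteq^*$-compatible with it (such a side must exist because $\mcU$, being an ultrafilter, coherently decides the induced subset of $\om$), then picks $V_\alpha$ within that side while diagonalizing the infinitely-many-missing conditions against $B_-$. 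The hardest aspect is verifying that the decisions made early do not block later order constraints; this requires choosing $V_\alpha$ carefully in a ``middle range''---large enough to sit above each $V_\beta$ for $\beta\in B_+$, yet leaving infinitely many points outside for later $V_\gamma$'s with $U_\gamma\not\supseteq U_\alpha$ to exploit---and is the place where the combinatorial richness of $\mcP(\om)$ under CH is essential.
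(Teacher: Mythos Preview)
Your construction has a genuine gap at the step where you require $V_\alpha\subseteq^*$ the chosen side of $P_\alpha$ \emph{and} $V_\alpha\supseteq^* V_\beta$ for all $\beta\in B_+$. These two demands are incompatible in general: if some $V_\beta$ with $\beta\in B_+$ has infinite intersection with both $P_\alpha$ and $\omega\setminus P_\alpha$ (and nothing in your induction prevents this), then any $V_\alpha\supseteq^* V_\beta$ also straddles $P_\alpha$, so $V_\alpha$ is almost contained in neither side. Concretely, if $U_\alpha=\omega$ then $B_+=\alpha$, so $V_\alpha$ must almost contain every earlier $V_\beta$; but the $V_\beta$'s are already fixed and need not all lie almost inside a single half of $P_\alpha$. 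Your justification that ``such a side must exist because $\mcU$, being an ultrafilter, coherently decides the induced subset of $\omega$'' does not address this: the side of $P_\alpha$ is being decided in $\mcV$, not in $\mcU$, and there is no induced partition of $\omega$ that $\mcU$ could be deciding here. You also only list half of the isomorphism constraints: for a full order isomorphism you additionally need $V_\alpha\subseteq^* V_\beta$ whenever $U_\alpha\subseteq U_\beta$ and $|V_\alpha\setminus V_\beta|=\omega$ whenever $U_\alpha\not\subseteq U_\beta$; these ``downward'' constraints interact nontrivially with the ``upward'' ones and with the filter conditions.

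The paper takes a completely different and much shorter route: fix any P\nobreakdash-point $\mcV$ (which exists under CH) and show directly that $\mcU\eqcf(\mcU\otimes\mcV)_*$ for the Fubini product $\mcU\otimes\mcV$. One Tukey direction is the obvious embedding $E\mapsto E\times\omega$; for the other, CH gives a nondecreasing bijection $\omega_1\to$ (cofinal subset of $\mcU$), hence $\mcU\eqcf\omega_1\times\mcU$, and then a map $(\pi,\psi)\colon(\mcU\otimes\mcV)_*\to\mcU\times\omega_1$ with $\pi(E)=\{i:(E)_i\in\mcV\}$ and $\psi$ any injection is shown to be Tukey using only the P\nobreakdash-point property of $\mcV$. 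This avoids any delicate recursive bookkeeping and any attempt to realize the full order type of $(\mcU,\supseteq)$ inside $\mcP(\omega)/\mathrm{fin}$.
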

\begin{proof}
Let $\mcU\in\betaoo$ and let $\mcV$ be P\nbd-point in $\betaoo$.
It suffices to show that $\mcU\eqcf(\mcU\otimes\mcV)_*$.
The map $E\mapsto\{\la i,j\ra: (i,j)\in E\times\om\}$ is
Tukey from $\mcU$ to $(\mcU\otimes\mcV)_*$, so it suffices to
show that $(\mcU\otimes\mcV)_*\leq_T\mcU$. Every nonprincipal 
ultrafilter on $\om$ has uncountable cofinality. Hence, we can use 
CH to get a bijection $h$ from $\om_1$ to a cofinal subset of $\mcU$
such that $h$ is nondecreasing, \ie\ 
$h(\beta)\not\supseteq h(\alpha)$ for all $\alpha<\beta<\oml$.
The map $h$ is necessarily Tukey, so $\om_1\times\mcU\eqcf\mcU$
by Lemma~\ref{tukeyprod}. Therefore, it suffices to show that
$(\mcU\otimes\mcV)_*\leq_T\mcU\times\om_1$.

Define $\pi\colon(\mcU\otimes\mcV)_*\rightarrow\mcU$ by 
$\pi(E)=\{i:(E)_i\in\mcV\}$. Let 
$\psi\colon(\mcU\otimes\mcV)_*\rightarrow\om_1$ be an arbitrary 
injection. It suffices to show that $(\pi,\psi)$ is Tukey.
Since $\psi$ sends uncountable sets to unbounded sets,
it suffices to show that $\pi$ sends countable unbounded sets to
unbounded sets. We will prove the contrapositive. Suppose that
$A_n\in\mcU\otimes\mcV$ for all $n<\om$ and $\{\pi(A_n):n<\om\}$ is
bounded in $\mcU$. We need to show that $\mcA=\{A_n:n<\om\}$ is
bounded in $(\mcU\otimes\mcV)_*$. Since $\pi[\mcA]$ is bounded, 
$B=\bigcap\pi[\mcA]\in\mcU$. Choose $C\subseteq\om$ such
that $(C)_i=\varnothing$ for all $i\not\in B$ and, for all $j\in B$,
$(C)_j\in\mcV$, $(C)_j\subseteq\bigcap_{n\leq j}(A_n)j$, and
$(C)_j\subseteq^* (A_n)_j$ for all $n<\om$. We can find such $C$ 
because $\mcV$ is a P\nbd-point and $(A_n)_j\in\mcV$ for all 
$(n,j)\in\om\times B$. It follows that $C\in\mcU\otimes\mcV$
and $C\subseteq^* A_n$ for all $n<\om$, so $\mcA$ is bounded in 
$(\mcU\otimes\mcV)_*$ as desired.
\end{proof}

By Theorem 44 of~\cite{dobtor}, if $\cardd=\cardu=\cardc$, then
there are $2^\cardc$\nbd-many cofinal types of neighborhood filters 
in $\beta\om$. Our next theorem shows that exactly one of these
$2^{\cardc}$ cofinal types is cofinally scalene and, assuming CH,
transfers this result to $\betaoo$.

\begin{theorem}[CH]
If $X$ is $\betaoo$ or $\beta\om$, then there are $2^\cardc$ 
cofinal types of neighborhood filters in $X$, but all the cofinally 
scalene neighborhood filters in $X$ are cofinally equivalent to 
$\cardc$ if $X=\betaoo$; $1$ if $X=\beta\om$.
\end{theorem}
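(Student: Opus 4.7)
The plan is to split the theorem into two halves---the count of $2^\cardc$ cofinal types, and the identification of the cofinally scalene ones---and handle each using already-established machinery: Theorem~44 of \cite{dobtor}, Lemma~\ref{betaooreduction}, and Theorem~3.13 of \cite{milovichtuk} (which was invoked earlier in this section).

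For the cardinality count, I would invoke Theorem~44 of \cite{dobtor} directly for $\beta\om$, since CH ensures $\cardd=\cardu=\cardc$. The principal points of $\beta\om$ are isolated and so account for only the single cofinal type $1$; the remaining $2^\cardc$ types are all non-principal, and Lemma~\ref{betaooreduction} embeds these injectively into the cofinal types of neighborhood filters of $\betaoo$, yielding $2^\cardc$ types there as well (and no more, since $\card{\betaoo}=2^\cardc$).

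For the structural claim in $\betaoo$, suppose $\mcU\in\betaoo$ satisfies $\mcU_*\eqcf\prod S$ for some finite $S\subseteq\Reg$. Since $\mcU_*$ has no maximum, $S$ is nonempty and $\min S\geq\om$. The crucial step is to rule out $\om\in S$: otherwise $\mcU_*\eqcf\om\times D$ with $D=\prod(S\setminus\{\om\})$, and either $D=1$ (contradicting $\cf(\mcU_*)\geq\oml$, since a non-principal ultrafilter has no countable base mod finite) or $D$ is $\oml$\nbd-directed (contradicting Theorem~3.13 of \cite{milovichtuk}, exactly as in the earlier consistency theorem). Hence $\min S\geq\oml$, so $\prod S$ is $\oml$\nbd-directed and $\mcU$ is a P\nbd-point. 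Under CH, $\card{\mcU_*}\leq\cardc=\oml$ forces $\cf(\mcU_*)=\oml$, and the P\nbd-point property gives $\add(\mcU_*)=\oml$; the preliminary characterization of cofinal equivalence to a regular cardinal then yields $\mcU_*\eqcf\oml=\cardc$.

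The $\beta\om$ case runs in parallel, routed through $\betaoo$. Principal neighborhood filters are $\eqcf 1$ because those points are isolated. For a non-principal $\mcU\in\beta\om$, the sequence $([n,\om):n<\om)$ witnesses $\add(\mcU)=\om$, so any cofinally scalene equivalence $\mcU\eqcf\prod S$ forces $\om\in S$ and hence $\mcU\eqcf\om\times D$. The case $D=1$ again contradicts $\cf(\mcU)\geq\oml$; otherwise $D$ is $\oml$\nbd-directed, and Lemma~\ref{betaooreduction} produces $p\in\betaoo$ with $\nbhd{p,\betaoo}\eqcf\mcU\eqcf\om\times D$, once more contradicting Theorem~3.13 of \cite{milovichtuk}. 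I do not expect a serious obstacle; the substantive content is supplied by the cited results, and the main care needed is in bookkeeping where CH enters---to invoke Theorem~44 of \cite{dobtor}, to apply Lemma~\ref{betaooreduction}, and to collapse P\nbd-points via $\card{\mcU_*}\leq\cardc$.
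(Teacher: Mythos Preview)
Your argument is correct and closely parallels the paper's proof: both use Theorem~44 of \cite{dobtor} plus Lemma~\ref{betaooreduction} for the $2^\cardc$ count, and both use Theorem~3.13 of \cite{milovichtuk} to eliminate $\om$ from $S$ in the $\betaoo$ case, then invoke CH to collapse P\nbd-points to cofinal type $\cardc$.

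The one genuine difference is in the $\beta\om$ case. The paper cites a second result, Theorem~3.16 of \cite{milovichtuk}, which directly rules out $\mcU\eqcf\om\times D$ with $D$ $\om_1$\nbd-directed (for the containment order on $\mcU$, not $\supseteq^*$); this handles $\beta\om$ without any appeal to CH, as the paper notes in the remark following the theorem. You instead route through Lemma~\ref{betaooreduction} to pull the problem back to $\betaoo$ and reuse Theorem~3.13. Your detour works, but it spends CH (via Lemma~\ref{betaooreduction}) where the paper does not. A minor stylistic point: your case split on $D=1$ is unnecessary in both halves, since $1=\prod\varnothing$ has a maximum and is therefore $\om_1$\nbd-directed, so Theorem~3.13 already covers it; the paper's earlier consistency argument makes exactly this observation.
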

\begin{proof}
By Lemma~\ref{betaooreduction}, the
$2^\cardc$ cofinal types of neighborhood filters of $\beta\om$
are also instantiated by neighborhood filters in $\betaoo$.
To prove the second half of the theorem, fix $\mcU\in\betaoo$. 
By Theorems~3.13 and 3.16 of~\cite{milovichtuk},
$\mcU_*\not\eqcf\om\times D$ and $\mcU\not\eqcf\om\times D$
if $\om<\add{D}$. Therefore, 
if $\mcU$ is not a P\nbd-point in $\betaoo$, then $\mcU_*$
is not cofinally scalene. If $\mcU$ is a P-point in $\betaoo$, 
then $\mcU_*\eqcf\cardc$ by CH. Finally, $\mcU$ is not a
P\nbd-point in $\beta\om$ because it is in the closure of $\om$,
so $\mcU$ is not cofinally scalene, but every principal
ultrafilter is.
\end{proof}

\begin{remark}
The proof that all cofinally scalene neighborhood filters in 
$\beta\om$ are principal did not use CH.
\end{remark}

For cofinally scalene neighborhood filters in $\betaoo$, CH
is a relatively uninteresting context because the only possible
cofinality is $\om_1$, so the only possible cofinal types 
are $\om_1$ and $\om\times\om_1$. Our next theorem's
hypotheses allow $\cardc$ to be arbitrarily large.

\begin{definition}
Given a class $\Gamma$ of forcings, let $\cardm_\Gamma$ denote
the least cardinal $\ka$ such that there is a forcing $\mbP\in\Gamma$
and a family $\mcD$ of $\ka$\nbd-many dense subsets of $\mbP$
such that no filter of $\mbP$ meets every $D\in\mcD$.
\end{definition}

\begin{lemma}\label{msnlinked}
If $1\leq n<\om$, $\ka$ is a regular infinite cardinal, 
$\ka<\cardml{n}$, and $\ka<\add{D}$,
then no neighborhood filter in $\betaoo$ is 
cofinally equivalent to $\ka\times D$.
\end{lemma}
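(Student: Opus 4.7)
The plan is to argue by contradiction, exploiting the forcing axiom packaged in $\ka<\cardml{n}$. Suppose $\mcU\in\betaoo$ satisfies $\mcU_*\eqcf\ka\times D$. First I would unpack the cofinal equivalence into a monotone map $A\colon\ka\times D\to\mcU$ whose image is cofinal in $\mcU_*$ and which satisfies $A(\alpha,d)\supseteq^* A(\alpha',d')$ whenever $(\alpha,d)\leq(\alpha',d')$ in $\ka\times D$. The hypothesis $\ka<\add{D}$ lets me absorb the $D$-coordinate: any prescribed $\ka$-sequence in $D$ has a single upper bound $d^*\in D$. Hence for each such $d^*$, the chain $\{A(\alpha,d^*):\alpha<\ka\}$ is unbounded above in $\mcU_*$ (since its preimage in $\ka\times D$ is unbounded), so $\add{\mcU_*}=\ka$.

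Next I would construct a $\sigma$-$n$-linked forcing $\mbP$ whose generic object is an $X\in\mcU$ with $X\subseteq^* A(\alpha,d^*)$ for all $\alpha<\ka$; this common pseudointersection would upper-bound the $\ka$-chain above, a contradiction. A natural first attempt is conditions of the form $(s,F)$ with $s\in[\om]^{<\om}$ and $F\in[\ka]^{<\om}$, ordered by $(s',F')\leq(s,F)$ iff $s\subseteq s'$, $F\subseteq F'$, and $s'\setminus s\subseteq\bigcap_{\alpha\in F} A(\alpha,d^*)$; the dense sets $E_{\alpha,k}=\{(s,F):\alpha\in F,\,|s|\geq k\}$, one for each $(\alpha,k)\in\ka\times\om$, force the union $X$ of stems to be infinite and eventually inside every $A(\alpha,d^*)$. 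Applying MA($\sigma$-$n$-linked) with $\ka<\cardml{n}$ to these $\ka$ many dense sets yields $X$.

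The principal obstacle is twofold. First, the naive forcing sketched above is $\sigma$-centered (conditions with a common $s$ are centered), which only needs MA($\sigma$-centered), pushing the bound all the way up to $\cardmcen$; the introduction notes that this bound is strictly too optimistic, since consistently $\cardmcen<\cardc$ and yet a neighborhood filter in $\betaoo$ can be cofinally equivalent to $\cardmcen\times\cardc$. So the real forcing must exploit a finer $\sigma$-$n$-linked decomposition tied to a genuinely $n$-wise combinatorial feature of the $A(\alpha,d^*)$'s---plausibly, layering additional Fubini-style membership data into $F$ so that only $n$-wise amalgamation is automatic, with $(n+1)$-wise amalgamation failing in general. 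Second, one must guarantee $X\in\mcU$, not merely that $X$ is a pseudointersection of the single chosen sequence $(A(\alpha,d^*))_{\alpha<\ka}$; this likely requires weaving a cofinal subfamily of $\mcU$, or a co-family from the dual ideal, into conditions without spoiling the $n$-linked structure. These two requirements together are the core technical challenge of the lemma.
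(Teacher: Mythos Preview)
Your setup is fine, and you correctly diagnose both obstacles; but you do not resolve either one, and in fact the pseudointersection-in-$\mcU$ strategy cannot be repaired along the lines you suggest. No matter how you decorate the conditions of a Mathias-style poset, the generic $X$ you produce will be an \emph{arbitrary} infinite pseudointersection of the chosen $\ka$-chain; there is no mechanism by which a forcing argument with $\ka$ many dense sets can force $X\in\mcU$, since membership in the fixed ultrafilter $\mcU$ is not expressible by fewer than $\cf(\mcU_*)=\cardc$ dense sets. So the ``must guarantee $X\in\mcU$'' problem is fatal, not merely technical.

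The paper takes an entirely different route: it does not try to bound the $\ka$-chain at all. Fixing $a\in D$ and a $\supseteq^*$-decreasing sequence $(F(i,a):i<\ka)$ cofinal along the $\ka$-axis, one thins to a strictly increasing $g\colon\ka\to\ka$ and, setting $m=n+1$, forms $m$ ideals $I_j$ ($j<m$) generated by the ``interleaved'' differences $F(g(2mi+2j),a)\setminus F(g(2mi+2j+2),a)$. Any two of these ideals have only finite intersections (consecutive differences along a $\supseteq^*$-chain are almost disjoint). This is precisely the configuration to which Corollary~21 of Avil\'es--Todor\v{c}evi\'c on multiple gaps applies: since each $I_j$ is $\ka$-generated and $\ka<\cardml{n}$, there exist $A_0,\ldots,A_{m-1}\subseteq\om$ with $\bigcap_{j<m}A_j=\varnothing$ and $X\subseteq^*A_j$ for all $X\in I_j$. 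Now some $A_j\notin\mcU$, so $\om\setminus A_j\in\mcU$, and chasing this through the cofinal equivalence produces an order-theoretic contradiction of the form $(2mi+2j+2,a)\trileq(2mi+2j+1,b)$ in $\ka\times D$. The appearance of $\cardml{n}$ is thus not via a direct forcing construction but via the multiple-gaps separation theorem; the number $n+1$ of ideals is exactly what makes that theorem require $\sigma$-$n$-linkedness rather than $\sigma$-centeredness.
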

\begin{proof}
The proof goes like the author's proof of
Theorem~3.13 of~\cite{milovichtuk}, which handled
the case where $n=1$ and $\ka=\om$.
(Note that $\om_1=\cardml{1}$ because all forcings
are $1$\nbd-linked.)
Fix $\mcU\in\betaoo$. Seeking a contradiction, suppose
that $\trileq$ is a preordering of $\mcU\cup(\ka\times D)$
that makes $\mcU_*$ and $\ka\times D$ cofinal suborders.
We then have $\add{\mcU_*}=\ka$.
Therefore, fixing $a\in D$, we can find $(F(i,a):i<\ka)$
such that $(i,a)\trileq F(i,a)\in\mcU$ and
$F(j,a)\supseteq^* F(i,a)$ for all $j<i<\ka$.
Next, for each $d\in D\setminus\{a\}$, choose 
$(F(i,d):i<\ka)$ such that $(i,d)\trileq F(i,d)\in\mcU$ and
$F(j,a)\supseteq^* F(i,d)$ for all $j\leq i<\ka$.

Inductively construct $g\colon\ka\rightarrow\ka$ as follows.  
Given $i<\ka$ and $g\restrict i$, if $d\in D$, then 
$\{F(g(j),d):j<i\}\trileq(l,b)$ for some $l<\ka$ and $b\in D$.  
Since $\ka<\add{D}$, there exist $l<\ka$ and a cofinal subset 
$C_i$ of $D$ such that for all $c\in C_i$ there exists $b\in D$ 
such that $\{F(g(j),c):j<i\}\trileq(l,b)$.
Choose $g(i)\geq l$ such that $g(j)<g(i)$ for all $j<i$.
This completes the construction of $g$. Observe that $g$ is 
strictly increasing and therefore has range cofinal in $\ka$.

Set $m=n+1$. 
For each $j<m$, let $I_j$ be the ideal generated by
$\{F(g(2mi+2j),a)\setminus F(g(2mi+2j+2),a):i<\ka\}$.
Observe that $X\cap Y$ is finite for all $X\in I_s$
and $Y\in I_t$ where $s<t<m$.
Since $\ka<\cardml{n}$, Corollary 21 of~\cite{aviles} implies
that there exist $A_0,\ldots,A_{m-1}\subseteq\om$ such that 
$\bigcap_{j<m}A_j=\varnothing$ and, for all $j<m$ and
$X\in I_j$, $X\subseteq^*A_j$.
Choose $j<m$ such that $A_j\not\in\mcU$;
choose $\alpha<\ka$ and $d\in D$ such that 
$\omega\setminus A_j\trileq(\alpha,d)$.
Choose $i<\ka$ such that $\alpha\leq g(2mi+2j)$;
choose $c\in C_{2mi+2j+1}$ such that $d\leq c$.
Since $\omega\setminus A_j\trileq(g(2mi+2j),c)$, we have
$\omega\setminus A_j\supseteq^*F(g(2mi+2j),c)$.
Since $F(g(2mi+2j),a)\supseteq^*F(g(2mi+2j),c)$, we have
$F(g(2mi+2j),a)\setminus A_j\supseteq^* F(g(2mi+2j),c)$.
Hence, $F(g(2mi+2j+2),a)\supseteq^* F(g(2mi+2j),c)$,
which implies $(2mi+2j+2,a)\trileq F(g(2mi+2j),c)$.
By our choice of $c$, $F(g(2mi+2j),c)\trileq(2mi+2j+1,b)$
for some $b\in D$. Therefore, $(2mi+2j+2,a)\trileq(2mi+2j+1,b)$.
We now have our desired contradiction:
$(2mi+2j+2,a)\leq(2mi+2j+1,b)$ but $2mi+2j+2\not\leq 2mi+2j+1$.
\end{proof}

\begin{remark}
By Theorem~24 of~\cite{aviles} and the fact that
$\cardmcen$ is regular (see~\cite[Thms. 7.12, 7.14]{blasshandbook}), 
it is consistent with ZFC that
$$
\cardml{n}<\sup_{1\leq k<\om}\cardml{k}<\cardmcen
$$ 
for all $n\in[1,\om)$. Also,
the proof of Theorem~5.10 of~\cite{milovichnts} shows that, 
given any pair $(\ka,\lm)$ of uncountable regular cardinals 
satisfying $\lm=\lm^{<\ka}$, some $\mbP$ satisfying Knaster's
condition forces $\cardmctbl=\cardmcen=\ka$, $\cardc=\lm$, 
and the existence of a neighborhood filter of $\betaoo$ 
that is cofinally equivalent to $\ka\times\lm$.
\end{remark}

\begin{theorem}\label{msnlinkedrect}
If MA($\sigma$\nbd-$n$\nbd-linked) holds for some $n\in[1,\om)$,
then every cofinally scalene neighborhood filter 
in $\betaoo$ is cofinally equivalent to $\cardc$.
\end{theorem}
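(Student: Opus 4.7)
The plan. Suppose $\mcU \in \betaoo$ has a cofinally scalene neighborhood filter; write $\mcU_* \eqcf \prod S$ with $S \subseteq \Reg$. Since the trivial factor $1$ can be dropped without changing cofinal type, assume $1 \notin S$. The aim is to deduce $S = \{\cardc\}$, so that $\mcU_* \eqcf \cardc$. Nonprincipality of $\mcU$ means $\mcU_*$ has no maximum, whereas $\prod \varnothing = 1$ does; so $S \neq \varnothing$.

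The central step, and the only nontrivial obstacle, is to show $\min S \geq \cardc$ via Lemma~\ref{msnlinked}. Set $\ka = \min S$ and $D = \prod(S \setminus \{\ka\})$, so that $\mcU_* \eqcf \prod S \eqcf \ka \times D$. Either $D = 1$ and $\add{D} = \infty$, or $\add{D} = \min(S \setminus \{\ka\}) > \ka$; in either case $\ka < \add{D}$. The hypothesis MA($\sigma$\nbd-$n$\nbd-linked) gives $\cardml{n} = \cardc$. So if $\ka < \cardc$, then $\ka < \cardml{n}$, the hypotheses of Lemma~\ref{msnlinked} are met, and we conclude that no neighborhood filter in $\betaoo$ is cofinally equivalent to $\ka \times D$, contradicting $\mcU_* \eqcf \ka \times D$. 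Hence $\ka \geq \cardc$.

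To finish, the coarse bound $\card{\mcU} \leq \cardc$ yields $\cf(\mcU_*) \leq \cardc$, so $\cf(\prod S) \leq \cardc$. On the other hand, projection onto each coordinate sends cofinal subsets of $\prod S$ to cofinal subsets of the target factor, so $\cf(\prod S) \geq \ka'$ for every $\ka' \in S$. Combined with $\min S \geq \cardc$, this forces every $\ka' \in S$ to equal $\cardc$, so $S = \{\cardc\}$ and $\mcU_* \eqcf \cardc$. The whole argument, beyond the invocation of Lemma~\ref{msnlinked}, is elementary bookkeeping about additivities and cofinalities of products of regular cardinals.
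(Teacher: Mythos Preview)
Your proof is correct and follows essentially the same route as the paper's: both arguments reduce to Lemma~\ref{msnlinked} by writing $\mcU_*\eqcf\ka\times D$ with $\ka=\min S$ and $\ka<\add{D}$, then use $\cardml{n}=\cardc$ to force $\min S\geq\cardc$. The only difference is cosmetic: to bound the elements of $S$ above by $\cardc$, the paper invokes the chain MA($\sigma$\nbd-$n$\nbd-linked) $\Rightarrow$ MA(countable) $\Rightarrow$ $\cf(\mcU_*)=\cardc$, whereas you use the more elementary observation $\card{\mcU}\leq\cardc$, which already suffices.
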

\begin{proof}
Fix $\mcU\in\betaoo$.
It is well known that MA($\sigma$\nbd-$n$\nbd-linked)
implies MA(countable) implies $\cf(\mcU_*)=\cardc$
(see~\cite[Thms. 7.13, 5.19, 9.7]{blasshandbook}),
so if $\mcU_*$ is cofinally scalene, then
$\mcU_*\eqcf\prod S$ where 
$\varnothing\not=S\subseteq\Reg\cap[\om,\cardc]$.
By Lemma~\ref{msnlinked}, $\min(S)\geq\cardml{n}$,
so $S=\{\cardc\}$.
\end{proof}

\begin{remark}
MA($\sigma$\nbd-$1$\nbd-linked) is equivalent to CH.
\end{remark}

\begin{question}
Can the hypothesis of the above theorem can be weakened
to MA($\sigma$\nbd-centered)? MA(countable)? 
\end{question}

\section{Products and Fubini products}\label{fubini}

The proof of Lemma~\ref{betaooreduction} used
Fubini products to partially answer questions about
the frequency of cofinally scalene posets of the
form $\mcU_*$ where $\mcU\in\betaoo$: CH implies that there 
are $2^\cardc$ cofinal types of the form $\mcU_*/\eqcf$, but
only one is cofinally scalene. Working in
the opposite direction, we now use product orders 
to answer a question about the cofinal types of Fubini products. 
Given a filter $\mcF$ on $\om$, 
adopt the notation $\mcF^{\otimes 1}=\mcF$ and 
$\mcF^{\otimes n+1}=\mcF\otimes\mcF^{\otimes n}$,
noting that the Fubini product is associative modulo 
the order isomorphisms induced by 
$\la\la i,j\ra, k\ra\leftrightarrow\la i,\la j,k\ra\ra$.
Question~39 of~\cite{dobtor} asks whether there is an 
ultrafilter $\mcU$ on $\omega$ such that 
$\mcU<_T\mcU^{\otimes 2}<_T\mcU^{\otimes 3}<_T[\cardc]^{<\om}$.
Our answer is a strong ``no.'' 

\begin{lemma}\label{fubinidirect}
If $\mcF$ and $\mcG$ are nonprincipal filters on $\om$,
then $\mcF\otimes\mcG\eqcf\mcF\times\mcG^\om$.
\end{lemma}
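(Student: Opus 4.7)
The plan is to establish cofinal equivalence by exhibiting Tukey maps in both directions, since $D \eqcf E$ iff $D \leq_T E \leq_T D$.

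For the easy direction, $\mcF \otimes \mcG \leq_T \mcF \times \mcG^\om$, define $\phi\colon \mcF \otimes \mcG \to \mcF \times \mcG^\om$ by setting $\phi(E) = (A_E,\, (B^E_i)_{i<\om})$, where $A_E = \{i : (E)_i \in \mcG\}$, $B^E_i = (E)_i$ for $i \in A_E$, and $B^E_i = \om$ otherwise. If $(A^*, (B^*_k))$ bounds $\phi[\mcE]$ for some $\mcE \subseteq \mcF \otimes \mcG$, then $E^* := \{\la k,j\ra : k \in A^*,\ j \in B^*_k\}$ lies in $\mcF \otimes \mcG$ and is contained in every $E \in \mcE$, so $\phi$ sends unbounded sets to unbounded sets.

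For the reverse direction, $\mcF \times \mcG^\om \leq_T \mcF \otimes \mcG$, the naive map $(A, (B_i)) \mapsto \{\la i,j\ra : i \in A,\ j \in B_i\}$ fails, because a bound in its image tells us nothing about $B_k$ when $k$ lies outside the ``good'' set. The fix is to distribute each $B_k$ across infinitely many slices via the intersection $\bigcap_{k \leq i} B_k$. Define $\psi\colon \mcF \times \mcG^\om \to \mcF \otimes \mcG$ by
$$
\psi(A, (B_i)) = \{\la i,j\ra : i \in A,\ j \in \textstyle\bigcap_{k\leq i} B_k\},
$$
which lies in $\mcF \otimes \mcG$ because finite intersections from $\mcG$ are in $\mcG$. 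Now suppose $E^*$ bounds $\psi[S]$ for some $S \subseteq \mcF \times \mcG^\om$, and set $A^* = \{i : (E^*)_i \in \mcG\} \in \mcF$; nonprincipality of $\mcF$ makes $A^*$ infinite. For each $k < \om$, put $B^*_k = \bigcup\{(E^*)_i : i \in A^*,\ i \geq k\}$; this lies in $\mcG$ since it contains some $(E^*)_i \in \mcG$. For any $(A, (B_i)) \in S$, the nonemptiness of each $(E^*)_i$ with $i \in A^*$ forces $i \in A$, so $A^* \subseteq A$; and $(E^*)_i \subseteq \bigcap_{k\leq i} B_k$ for such $i$ gives $B^*_k \subseteq B_k$ for every $k$. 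Thus $(A^*, (B^*_k))$ bounds $S$, so $\psi$ is Tukey.

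The main obstacle is that second direction: the obvious embedding loses information about $B_k$ outside the ``good'' set, and the remedy is the intersection trick, which lets the infiniteness of $A^*$ (from nonprincipality of $\mcF$) recover bounds on every coordinate from a single bound in $\mcF \otimes \mcG$.
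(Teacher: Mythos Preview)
Your proof is correct and uses essentially the same idea as the paper's, namely the ``cumulative intersection'' trick $\bigcap_{k\leq i}B_k$ to spread each coordinate of $\mcG^\om$ across infinitely many slices. The only differences are presentational: the paper works with convergent maps rather than Tukey maps and, invoking Lemma~\ref{tukeyprod}, splits your hard direction into the two separate reductions $\mcF\leq_T\mcF\otimes\mcG$ and $\mcG^\om\leq_T\mcF\otimes\mcG$, whereas you handle $\mcF\times\mcG^\om\leq_T\mcF\otimes\mcG$ in one step.
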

\begin{proof}
By Lemma~\ref{tukeyprod}, it suffices to show that
(1) $\mcF\leq_T\mcF\otimes\mcG$, (2) $\mcG^\om\leq_T\mcF\otimes\mcG$,
and (3) $\mcF\otimes\mcG\leq_T\mcF\times\mcG^\om$.
First, the map $E\mapsto\{\la i,j\ra: (i,j)\in E\times\om\}$ is
Tukey from $\mcF$ to $\mcF\otimes\mcG$.
Second, let us construct a convergent map 
$\Phi\colon\mcF\otimes\mcG\rightarrow\mcG^\om$.
Given $A\in\mcF\otimes\mcG$ and $i<\om$, 
set $\pi(A)=\{j: (A)_j\in\mcG\}$ and
$\Phi(A)(i)=(A)_k$ where $k=\min(\pi(A)\setminus i)$.
Suppose that $\mcC\subseteq\mcF\otimes\mcG$ is cofinal and $\xi\in\mcG^\om$.
To prove that $\Phi$ is convergent, it suffices to show that 
$\Phi(A)\geq\xi$ for some $A\in\mcC$.
Set $\zeta(i)=\bigcap_{j\leq i}\xi(j)$ for all $i<\om$.
Choose $A\in\mcC$ such that $A\subseteq\bigcup_{i<\om}(\{i\}\times\zeta(i))$.
Then, for all $i<\om$, 
$$
\Phi(A)(i)\subseteq\zeta(\min(\pi(A)\setminus i))
\subseteq \zeta(i)\subseteq \xi(i),
$$
so $\Phi(A)\geq\xi$ as desired.

Finally, we construct a convergent map 
$\Psi\colon\mcF\times\mcG^\om\rightarrow\mcF\otimes\mcG$.
Given $(A,\xi)\in\mcF\times\mcG^\om$, 
set $\Psi(A,\xi)=\bigcup_{i\in A}(\{i\}\times \xi(i))$.
The map $\Psi$ is surjective and order preserving, so it is convergent.
\end{proof}

\begin{theorem}\label{squarecube}
For all nonprincipal filters $\mcF,\mcG$ on $\om$,
$\mcF\otimes\mcG\eqcf\mcF\otimes \mcG^{\otimes 2}$.
In particular, $\mcF^{\otimes 2}\eqcf\mcF^{\otimes 3}$.
\end{theorem}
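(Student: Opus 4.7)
The plan is to chain three applications of Lemma~\ref{fubinidirect}, combined with associativity of $\otimes$ and a relabeling of coordinates in $\om$\nbd-powers. First I would use that $\otimes$ is associative modulo the order isomorphism induced by $\la\la i,j\ra,k\ra\leftrightarrow\la i,\la j,k\ra\ra$ to rewrite $\mcF\otimes\mcG^{\otimes 2}\iso(\mcF\otimes\mcG)\otimes\mcG$. Since the Fubini product of nonprincipal filters on $\om$ is again a nonprincipal filter on $\om$, Lemma~\ref{fubinidirect} applies with $\mcF\otimes\mcG$ in place of $\mcF$, giving $(\mcF\otimes\mcG)\otimes\mcG\eqcf(\mcF\otimes\mcG)\times\mcG^\om$.

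Next I would apply Lemma~\ref{fubinidirect} once more, now to the leading factor, yielding
$$
(\mcF\otimes\mcG)\times\mcG^\om
\eqcf(\mcF\times\mcG^\om)\times\mcG^\om
\iso\mcF\times(\mcG^\om\times\mcG^\om).
$$
Any bijection between $\om\sqcup\om$ and $\om$ induces an order isomorphism $\mcG^\om\times\mcG^\om\iso\mcG^\om$, so the right side is order isomorphic to $\mcF\times\mcG^\om$, and a third application of Lemma~\ref{fubinidirect} identifies this with $\mcF\otimes\mcG$ up to $\eqcf$. Concatenating the equivalences in this chain proves the first assertion of the theorem. For the ``in particular'' clause, I would specialize to $\mcG=\mcF$ and invoke $\mcF^{\otimes 3}=\mcF\otimes\mcF^{\otimes 2}$.

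I do not expect any substantial obstacle. The three items that need checking --- associativity of $\otimes$ modulo the pairing isomorphism, nonprincipality of $\mcF\otimes\mcG$, and the coordinate\nbd-relabeling isomorphism $\mcG^\om\times\mcG^\om\iso\mcG^\om$ --- are all routine. If anything is mildly delicate, it is confirming that $\mcF\otimes\mcG$ meets the hypothesis of Lemma~\ref{fubinidirect} when the lemma is applied in its nonstandard form, but this amounts to observing that any cofinite set belongs to $\mcF\otimes\mcG$.
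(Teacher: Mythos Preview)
Your proof is correct and follows essentially the same approach as the paper: three applications of Lemma~\ref{fubinidirect} together with a coordinate relabeling of $\om$\nbd-powers. The only cosmetic difference is the choice of grouping---the paper expands $\mcF\otimes(\mcG\otimes\mcG)$ as $\mcF\times(\mcG\otimes\mcG)^\om$ and simplifies $(\mcG\times\mcG^\om)^\om\iso\mcG^\om$, whereas you re-associate to $(\mcF\otimes\mcG)\otimes\mcG$ and simplify $\mcG^\om\times\mcG^\om\iso\mcG^\om$---but the underlying argument is the same.
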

\begin{proof} Use Lemma~\ref{fubinidirect} three times.
\begin{align*}
\mcF\otimes(\mcG\otimes\mcG)&\eqcf\mcF\times(\mcG\otimes\mcG)^\om\\
                            &\eqcf\mcF\times(\mcG\times\mcG^\om)^\om\\
                            &\eqcf\mcF\times\mcG^\om\\
                            &\eqcf\mcF\otimes\mcG.
\end{align*}
\end{proof}

\begin{remark}
The second half of the above theorem, $\mcF^{\otimes 2}\eqcf\mcF^{\otimes 3}$, 
restricted to the case where $\mcF$ is an ultrafilter, 
was first proven by Andreas Blass using nonstandard models of arithmetic. 
After learning of the result from Blass, the author, 
having already proved Lemma~\ref{fubinidirect},
immediately thought of the above short proof, 
only later reading Blass' less direct proof (which is unpublished).
\end{remark}

As another application of Lemma~\ref{fubinidirect}, we show that
the Fubini product is commutative modulo cofinal equivalence among the
nonprincipal P\nbd-filters on $\om$ (\ie\ nonprincipal filters $\mcF$ 
on $\omega$ such that if $\vec{A}\in\mcF^\om$,
then, for some $B\in\mcF$, $B\subseteq^* A_i$ for all $i<\om$).

\begin{theorem}\label{fubinicommute}
If $\mcF$ and $\mcG$ are nonprincipal filters on $\om$
and $\mcG$ is a P-filter, then $\mcF\otimes\mcG\eqcf\mcF\times\mcG\times\om^\om$.
Therefore, if $\mcF$ is also a P-filter, then $\mcF\otimes\mcG\eqcf\mcG\otimes\mcF$.
\end{theorem}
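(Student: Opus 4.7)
The plan is to reduce everything to the key equivalence $\mcG^\om\eqcf\mcG\times\om^\om$ valid for every nonprincipal P-filter $\mcG$ on $\om$. Granting this, the first assertion is immediate from Lemma~\ref{fubinidirect}: multiplying the Tukey equivalence $\mcF\otimes\mcG\eqcf\mcF\times\mcG^\om$ through by the identity on $\mcF$ yields $\mcF\otimes\mcG\eqcf\mcF\times\mcG\times\om^\om$. The second assertion then follows by symmetry, since applying the same reasoning to $\mcG\otimes\mcF$ gives $\mcG\otimes\mcF\eqcf\mcG\times\mcF\times\om^\om$, and the latter is order-isomorphic to $\mcF\times\mcG\times\om^\om$.

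For the key equivalence, the easy direction is $\mcG\times\om^\om\leq_T\mcG^\om$: by Lemma~\ref{tukeyprod} it suffices to establish $\mcG\leq_T\mcG^\om$ and $\om^\om\leq_T\mcG^\om$ separately. The former is witnessed by the diagonal embedding $B\mapsto(B,B,\ldots)$. For the latter, $f\mapsto(\om\setminus f(i))_{i<\om}$ lands in $\mcG^\om$ because $\mcG$, being nonprincipal, contains every cofinite set; this map is Tukey, for any upper bound $\vec{C}\in\mcG^\om$ of the image of an $\mcS\subseteq\om^\om$ must satisfy $f(i)\leq\min C_i$ for all $f\in\mcS$ and $i<\om$, and hence bound $\mcS$ in $\om^\om$.

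The hard part will be the reverse direction $\mcG^\om\leq_T\mcG\times\om^\om$, and this is the only place the P-filter hypothesis enters. For each $\vec{A}\in\mcG^\om$, the P-filter property furnishes a pseudointersection $B_{\vec{A}}\in\mcG$ with $B_{\vec{A}}\subseteq^* A_i$ for every $i<\om$. I would then set $f_{\vec{A}}(i)=\min\{n<\om:B_{\vec{A}}\setminus n\subseteq A_i\}$ and define $\Phi(\vec{A})=(B_{\vec{A}},f_{\vec{A}})$. To verify $\Phi$ is Tukey, suppose $(C,g)\in\mcG\times\om^\om$ bounds $\Phi[\mcS]$ above, meaning $C\subseteq B_{\vec{A}}$ and $g(i)\geq f_{\vec{A}}(i)$ for every $\vec{A}\in\mcS$ and $i<\om$. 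Then $D_i:=C\setminus g(i)$ lies in $\mcG$ and satisfies $D_i\subseteq B_{\vec{A}}\setminus f_{\vec{A}}(i)\subseteq A_i$, so $\vec{D}\in\mcG^\om$ is an upper bound for $\mcS$.

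The conceptual content of the hard step is that being a P-filter lets us trade any countable family in $\mcG$ for the pair consisting of a single pseudointersection together with a numerical ``error function'' that records how far above the pseudointersection each member of the family sits. Once this trade is made, the $\mcG$-information and the $\om$-information in $\mcG^\om$ decouple, giving the product decomposition $\mcG\times\om^\om$. I do not anticipate any subtle obstacles beyond correctly setting up the definitions of $B_{\vec{A}}$ and $f_{\vec{A}}$; everything else is bookkeeping with reverse-containment orderings.
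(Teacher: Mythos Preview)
Your proof is correct and matches the paper's approach essentially line for line: reduce via Lemma~\ref{fubinidirect} to $\mcG^\om\eqcf\mcG\times\om^\om$, handle the easy direction with the diagonal map and $f\mapsto(\om\setminus f(i))_{i<\om}$, and handle the hard direction by sending $\vec{A}$ to a pseudointersection $B$ together with a function $h$ satisfying $B\setminus h(i)\subseteq A_i$. The paper is terser about the verification that this last map is Tukey, but your expanded check is exactly what is intended.
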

\begin{proof}

By Lemma~\ref{fubinidirect}, it suffices to show that
$\mcG^\om\eqcf\mcG\times\om^\om$. By Lemma~\ref{tukeyprod},
it suffices to show that (1) $\mcG\leq_T\mcG^\om$, 
(2) $\om^\om\leq_T \mcG^\om$,
and (3) $\mcG^\om\leq_T\mcG\times\om^\om$.
First, the diagonal map from $\mcG$ to $\mcG^\om$ is Tukey.
Second, $(n_i:i<\om)\mapsto(\om\setminus n_i:i<\om)$ is a Tukey map
from $\om^\om$ to $\mcG^\om$.
(Alternatively, (2) follows from Fact~31 and Theorem~32 of~\cite{dobtor},
which are stated for ultrafilters but 
have proofs that work for filters in general.)
Finally, following the proof of Theorem~33 in~\cite{dobtor},
map each $\vec{A}\in\mcG^\om$ to some $(B,h)\in\mcG\times\om^\om$
such that $B\setminus h(i)\subseteq A_i$ for all $i<\om$.
Again, the map is Tukey.
\end{proof}

\begin{remark}
Under the additional hypothesis that $\mcG\geq_T\om^\om$, 
Theorem~\ref{fubinicommute} combines with Lemma~\ref{tukeyprod} to
conclude $\mcF\otimes\mcG\eqcf\mcF\times\mcG$. Therefore,
Theorem~\ref{fubinicommute} improves upon Corollary~34 of~\cite{dobtor},
which says that if $\mcF$ and $\mcG$ are nonprincipal filters on $\om$,
$\mcG$ is a P-filter, and $\mcG\geq_T\om^\om$, 
then $\mcF\otimes\mcG\eqcf\mcF\times\mcG$.
(Technically, Corollary~34 of~\cite{dobtor} is stated only for
ultrafilters, but its proof works for filters in general.)
\end{remark}

All the results of this section naturally generalize to the
$\ka$\nbd-complete uniform filters on an arbitrary infinite regular $\ka$.

\end{document}